\newtheorem{lemma}{Lemma}
\numberwithin{equation}{section} \numberwithin{lemma}{section}
\newcommand{\bb}{{\rm\bf{b}}}
\newcommand{\calN}{{\mathcal{N}}}
\newcommand{\kommentar}[1]{}
\newcommand{\RRe}{\rm{Re}}
\newcommand*{\reff}[1]{(\ref{#1})}
\newcommand{\tEN}{\widetilde{E}_N^{(a,b)}}
\newcommand{\EN}{E_N^{(\alpha,\beta)}}
\newcommand{\ENf}{\EN(\phi)}
\newcommand{\nuN}{\nu_N^{(\alpha,\beta)}}
\newcommand{\tCN}{\widetilde C_N^{(a,b)}}
\newcommand{\CN}{C_N^{(\alpha,\beta)}}
\newcommand{\JN}{J_N^{(a,b)}}
\newtheorem{remark}[lemma]{Remark}
\newtheorem{proposition}[lemma]{Proposition}
\begin{document}

\title[The lowest eigenvalue in Jacobi ensembles and Painlev\'e VI]{THE LOWEST EIGENVALUE OF JACOBI RANDOM MATRIX ENSEMBLES AND PAINLEV\'E VI}

\author[Due\~nez]{Eduardo Due\~nez}\email{eduenez@math.utsa.edu}
\address{Department of Mathematics, University of Texas at San Antonio, San Antonio, TX 78249, USA}

\author[Huynh]{Duc Khiem Huynh}\email{dkhuynhms@gmail.com}
\address{Department of Pure Mathematics, University of Waterloo, Waterloo, ON, N2L 3G1, Canada}

\author[Keating]{Jon P. Keating}\email{J.P.Keating@bristol.ac.uk}
\address{School of Mathematics, University of Bristol, Bristol BS8 1TW, UK}

\author[Miller]{Steven J. Miller}\email{Steven.J.Miller@williams.edu}
\address{Department of Mathematics and Statistics, Williams College, Williamstown, MA 01267, USA}

\author[Snaith]{Nina C. Snaith}\email{N.C.Snaith@bristol.ac.uk}
\address{School of Mathematics, University of Bristol, Bristol BS8 1TW, UK}


\subjclass[2010]{34M55 (primary), 15B52, 33C45, 65F15 (secondary)}

\keywords{Jacobi ensembles, Painlev\'e VI, Selberg-Aomoto's integral}

\thanks{The second-named author was partially supported by EPSRC, a
  CRM postdoctoral fellowship and NSF grant DMS-0757627. The
  fourth-named author was partially supported by NSF grants
  DMS-0855257 and DMS-097006. The final author was supported by
  funding from EPSRC.  The first, second and fourth authors thank the
  University of Bristol for its hospitality and support during the
  preparation of this manuscript.}

\date{\today}

\thispagestyle{empty} \vspace{.5cm}
\begin{abstract}
  We present two complementary methods, each applicable in a different range,
  to evaluate the distribution of the lowest eigenvalue of random matrices in a
  Jacobi ensemble.  The first method solves an associated Painlev\'e~VI
  nonlinear differential equation numerically, with suitable initial conditions
  that we determine.  The second method proceeds via constructing the
  power-series expansion of the Painlev\'e~VI function. Our results are
  applied in a forthcoming paper
  in which we model the
  distribution of the first zero above the central point of elliptic curve
  $L$-function families of finite conductor and of conjecturally orthogonal
  symmetry.
\end{abstract}

\maketitle
\tableofcontents

\section{Introduction}

We present techniques for calculating numerically the distribution of the lowest
eigenvalue (or synonymously, we say the `first eigenvalue') of random matrices
in Jacobi ensembles~$\JN$. We proceed as follows.  We introduce the Jacobi
ensemble $J_N=\JN$ of $N\times N$ random matrices. We relate the distribution of
the lowest eigenvalue of matrices in $J_N$ to the probability $\EN(0;I)$ that a
Jacobi ensemble has no levels in some interval~$I=[t,1]$ for $0\le t\le1$. We
use two complementary methods to evaluate $\EN$ relying on its interpretation as
the Okamoto $\tau$-function of a Painlev\'e~VI system, along with an auxiliary
Hamiltonian function $h(t)$ for which Forrester and Witte \cite{ForrWitte04}
have established explicit differential equations of Painlev\'e~VI type. Our
first method uses the Selberg-Aomoto integral to obtain explicit initial
conditions for the Painlev\'e~IV equation satisfied by $h(t)$ which are valid
close to the edge $t = 1$.  With these in hand we provide the MATLAB code
(relying on its built-in ordinary differential equation solver) to numerically
evaluate $h(t)$ alongside~$\EN$.  The second method uses the Painlev\'e~VI
equation with explicit initial conditions at the edge $t = 0$ together with
power series manipulations to recursively find the power series expansions of
$h(t)$ and~$\EN$.  We implement this algorithm on SAGE using its ability to
perform power series manipulations and symbolic algebra.

The use of these two complementary methods is essential in order to compute
$h(t)$ and $\EN(t)$ accurately over the whole range $0\le t\le1$. The
Painlev\'e~VI equation and its solutions have singularities at the edges $t=0$ and
$t=1$. The first method uses a solution found starting from an explicit initial
condition at a point $t_0=1-\varepsilon$ close to~$1$, where $\varepsilon>0$ is
a small positive parameter we determine empirically. Such an explicit initial
condition is found in Sections~\ref{InitialConditionsJacobi}
and~\ref{Matlabcode}; however, the initial condition is 
correct only up to terms of size $O(\varepsilon^2)$. The errors introduced by such
approximation and by the numerical Runge-Kutta method result in a computed
solution whose range of reliability may not extend to $t$
close to the singularity at $t=0$. The second method, described in
Section~\ref{sec:power-series}, constructs a truncated but otherwise exact power
series for $h(t)$ about $t=0$ (the singularity at $t=0$ is handled indirectly)
up to terms of order $O(t^{D+1})$ where $D$ is the degree of the
truncation. Such a solution is reliable over any interval $[0,u]$ with $u<1$,
provided $D$ is large enough, though not necessarily over the entire interval
$[0,1]$ in view of the singularity at $t=1$. In Section~\ref{sec:numer-painl-solv} we analyze the range of parameters $a,b,N$
for which both methods are stable in the sense that both numerically computed
solutions agree in some subinterval $[u,v]$ of $(0,1)$, which implies that the
numerical solver is robust for this range of parameters. 

It is important to note that the methods we use apply to non-integer values
of~$N$.

Painlev\'e differential equations have played a role in many
problems in random matrix theory, ranging from the distribution of
the eigenvalues in the bulk to the largest and smallest
eigenvalues, and have been extensively studied; for our purposes,
the most relevant are the investigations of solutions to
Painlev\'e~VI. We briefly mention some of the literature. We refer
the reader to the special edition of the Journal of Physics A
(Volume 39, Number 39, 2006), which celebrates 100 years of
Painlev\'e~VI, especially the historical introduction and survey
\cite{CJMNN} and the article by Forrester and Witte \cite{FW06} on
connections with random matrix theory; see also the recent works
by Dai and Zhang \cite{DZ09} and Chen and Zhang \cite{CZ09} for
determinantal formulas obtained from ladder operators.

The main contribution of this paper is the derivation of an
algorithm to compute numerically the distribution of the lowest
eigenvalue in the Jacobi ensembles, and a discussion of its
implementation and accuracy.  The motivation for this project
comes from attempts to understand the observations in \cite{Mil06}
on the distribution of the first zero above the central point in
families of elliptic curve $L$-functions when the conductors are
small. The Katz-Sarnak conjectures \cite{KatzSarnak99a,
KatzSarnak99b} predict that as the conductors of the elliptic
curves tend to infinity their zero statistics should agree with
the $N\to\infty$ scaling limits of the corresponding statistics of
the eigenvalues of matrices from a classical compact group. For
suitable test functions this was proved in \cite{Mil04_a,
Young06}; however, for finite conductors the numerical data in
\cite{Mil06} is in sharp disagreement with the limiting behavior
of these random matrix ensembles. In particular, the first zero
above the central point is repelled, with the repulsion decreasing
as the conductors increase. In a forthcoming paper 
we complete the study of the low lying zeros of elliptic curve
$L$-functions, and obtain a model which describes the behavior of
these zeros for finite conductors. One of the key ingredients in
our model is the lowest eigenvalue of these Jacobi ensembles of
$N\times N$ matrices, often requiring non-integer values of $N$,
which is the main result of this paper.



\section{Jacobi ensembles and their first eigenvalue}
\label{SectionJacobiEnsemble}
Let $J_N=J^{{(a,b)}}_N$ denote the Jacobi ensemble on $N$~levels $0\le
x_j\le 1$, $j=1,2,\dots, N$, with real parameters $a,b>-1$. Explicitly, the
$N$-level (joint) probability density of levels of $J_N$ on $[0,1]^N$ with
respect to its Lebesgue measure $dx_1\,dx_2\cdots dx_N$ is given by
{\allowdisplaybreaks
\begin{equation}\label{eq:Jacobi-cos-PDF}
\tCN  \prod_{j=1}^N W(x_j) \prod_{1 \leq j < k  \leq N} (x_k - x_j)^2
\end{equation}
}
where the weight function $W=W^{{(a,b)}}$ on $[0,1]$ is given by
\begin{equation}
W(x) = x^b (1-x)^a
\end{equation}
and $\tCN $ is the ensemble's normalization constant.  Jacobi ensembles as
described above correspond to suitable ensembles of self-dual random matrices via
the angular variables~$\phi_j$ defined by
\begin{equation}
x_j = \frac{1 + \cos \phi_j}{2},\qquad 0 \le \phi_{j} \le \pi.
\end{equation}
Note that the edges $x=0$, $x=1$ correspond respectively to $\phi=\pi$, $\phi=0$
under this change of variables.
We refer the reader to~\cite{Duenez2004} and the forthcoming
book~\cite{LGRM} for details regarding the matrix realizations of Jacobi
ensembles, for which we will otherwise have no direct use.
In what follows we will go back and forth between the absciss\ae\ $x_j$ and the
angular variables $\phi_j$, but will in any case refer to the associated
ensemble by the Jacobi name and denote it by~$J_N$.  In terms of the angular
variables, and with respect to Lebesgue measure on $(0,\pi)^{N}$, the $N$-level
(joint) probability density for $J_N$ is given by
\begin{equation} \label{JacobiPDF} \CN \prod_{j=1}^N w(\cos \phi_j)
 \prod_{1 \leq j < k  \leq N} (\cos \phi_k - \cos \phi_j)^2
\end{equation}
with the weight function $w=w^{{(\alpha,\beta)}}$ on $(0,\pi)$,
\begin{equation} \label{Jacobiweight}
w(\cos\phi) = (1 - \cos\phi)^{\alpha} (1 + \cos\phi)^\beta.
\end{equation}
The parameters $\alpha,\beta>-\frac12$ are related to $a,b$ above by
$\alpha=a+1/2$, $\beta=b+1/2$, and $\CN $ is the appropriate normalization
constant, namely $\CN$ $=$ $2^{-(N+a+b+2)N}$ $\widetilde
C_N^{(\alpha-1/2,\beta-1/2)}$ for the constant $\tCN $
of~\eqref{eq:Jacobi-cos-PDF}. For suitable choices for $\alpha$ and $\beta$ we
obtain the joint probability density of the $N$ independent eigenphases for the
classical groups of matrices SO$(2N)$, SO$(2N+1)$ and USp$(2N)$, when the latter
are endowed with an invariant (Haar) probability measure and regarded as random
matrix ensembles. The case $\alpha = \beta = 0$ corresponds to SO$(2N)$, $\alpha
= 1$ and $\beta = 0$ corresponds to SO$(2N + 1)$, and $\alpha = \beta = 1$ to
USp$(2N)$. This is explained in detail in~\cite{Duenez2004}. Below in
\reff{constant} we give an explicit expression for the normalization
constant~$\tCN$. (Jacobi-distributed pseudorandom sequences of levels can be
generated from a uniform pseudorandom sequence using only the Jacobi joint
probability density via for instance the Accept-Reject Algorithm whose
applicability is quite broad; see for instance~\cite{RC}.)

As remarked above, the Jacobi ensemble $J_N$ describes the eigenvalue
statistics in suitable ensembles of self-dual random matrices having $N$ pairs
of eigenvalues $e^{\pm i\phi_{j}}$, $j=1,2,\dots,N$; we call $\phi$ the
eigenphase of the eigenvalue $e^{i\phi}$.  Let $\EN(n; I)$ denote the
probability that a random matrix $A \in J_N$ has exactly $n$ eigenphases in the
interval $I = [0,\phi]$. As shorthand notation we write $\EN(\phi)$ for
$\EN(0;[0,\phi])$, namely the probability of having no eigenphases in the
interval $[0,\phi]$.  The probability density function $\nuN(\phi)$ of the
distribution of the first eigenphase is related to $\EN(\phi)$ by
\begin{equation} \label{distributionoffirsteigenphases}
\nuN(\phi) = -\frac{d}{d \phi} \EN(\phi).
\end{equation}
We can deduce the relation \reff{distributionoffirsteigenphases} as follows:
assume that the interval $[0,\phi]$ contains no eigenvalues. Then a small
increment $\varepsilon > 0$ of the interval to $[0, \phi + \varepsilon]$ has two
possible outcomes.  Either the interval $[0, \phi + \varepsilon]$ contains no
eigenvalues, or it contains some. The probability of the first event is $\EN(\phi+\varepsilon)$. It follows that
$\EN(\phi) - \EN(\phi+\varepsilon)$ is the probability that the
interval $[\phi, \phi+ \varepsilon]$ contains at least one eigenvalue; as $\varepsilon
\rightarrow 0$ there can be only one, namely the first eigenphase in $[\phi,
\phi+ \varepsilon]$.  Thus
\begin{equation}
\lim_{\varepsilon \rightarrow 0} \frac{\EN(\phi) - \EN(\phi + \varepsilon)}{\varepsilon} = -\frac{d}{d \phi} \EN(\phi)
\end{equation}
indeed yields the probability density function $\nuN(\phi)$ of the first
eigenphase. An alternative way to prove \eqref{distributionoffirsteigenphases}
is to observe that $1-\EN(\phi)$ is the probability that $[0,\phi]$ contains at
least one eigenphase, hence that the first eigenphase $\phi_{\min}$ is at most
$\phi$; otherwise said $1-\EN(\phi)$ is the cumulative distribution function of
the first eigenphase $\phi_{\min}$, so its derivative is equal to the
probability density function $\nuN(\phi)$. In Section~\ref{Matlabcode} we shall
need to scale the angular variable $\phi$ by a factor of $N/\pi$ in order to
consider eigenphases of mean unit spacing on $[0,N]$.

We have
\begin{equation} \label{EGN}
  \begin{split}
\EN(\phi) & = \CN  \int_\phi^\pi \cdots \int_\phi^\pi
\prod_{j = 1}^N (1 -\cos\phi_j)^\alpha (1 +\cos\phi_j)^\beta \\
& \quad \times \prod_{1 \leq j < k \leq N} (\cos\phi_j - \cos\phi_k)^2
\,d \phi_1 \cdots d \phi_N
\end{split}
\end{equation}
for fixed $\alpha,\beta > -1/2$ and the normalization constant $\CN$
of~\eqref{JacobiPDF}.  There is no known method to evaluate the multiple
integral in equation~\reff{EGN} exactly. $\ENf$ is related to a Painlev\' e~VI
transcendental function $h(t)$, namely a certain solution to a second-order
nonlinear ordinary differential equation. In
Proposition~\ref{InitialConditionProposition} we provide the first few terms of
a power-series expansion of $\ENf$ for $\phi$ close to 0; these provide the
initial conditions for the differential equation we aim to solve. Our main
reference for the theory is the work of Forrester and Witte \cite{ForrWitte04}.
Their result is stated for the abscissal counterpart to the function $\EN(\phi)$
of~(\ref{EGN}), namely the function $\tEN(t)$ defined by
\begin{equation} \label{Ewdefinition} \tEN(t) = \tCN \int_0^t \cdots \int_0^t
  \prod_{j = 1}^N x_j^b(1-x_j)^a \prod_{1 \leq j < k \leq N}(x_j - x_k)^2 d x_1
  \cdots d x_N
\end{equation}
with the normalization constant $\tCN $
of~\eqref{eq:Jacobi-cos-PDF}. The functions \reff{Ewdefinition}
and \reff{EGN} are related by the change of variables
\begin{equation}
a = \alpha -1/2 \quad\mbox{and}\quad b = \beta -1/2
\end{equation}
along with
\begin{equation}
\label{eq:t-vs-phi}
t = \frac{1 + \cos \phi}{2}\quad \mbox{and}\quad x_j = \frac{1 + \cos \phi_j}{2}
\end{equation}
where $0 \le t, x_j \le 1$. Explicitly,
\begin{equation}
\EN(\phi) = \tEN\left(\frac{1 + \cos \phi}2\right).
\end{equation}


\section{First method: The auxiliary Hamiltonian and
  Painlev\'e~VI}\label{Method1-PainleveEq}

Both of our mutually complementary methods rely on the interpretation of $\tEN$
as an Okamoto $\tau$-function and ensuing relation to a Painlev\'e system with
associated auxiliary Hamiltonian $h(t)$; this Hamiltonian arises as the solution
of a Painlev\'e~VI equation with the exact parameters determined by Forrester
and Witte in Proposition 13 of~\cite{ForrWitte04} as follows.

\begin{proposition} \label{Painleveproposition}Let $a,b > -1$ and $N$ be a positive integer.
The auxiliary Hamiltonian
\begin{equation} \label{Okamoto}
h(t) = t \cdot e_2'[\bb] - \tfrac{1}{2}e_2[\bb] + t(t-1)\tfrac{d}{dt} \log \tEN(t)
\end{equation}
where
\begin{align} \label{definitionofbs}
\bb & = (b_1,b_2,b_3,b_4) = \bigg(N + \frac{a + b}{2}, \frac{a-b}{2}, - \frac{a+b}{2}, -N-\frac{a+b}{2} \bigg),\\ \label{e2}
e_2[\bb] & = b_1 b_2 + b_1 b_3+ b_1 b_4 + b_2 b_3 + b_2 b_4 + b_3 b_4,\\ \label{e2p}
e'_2[\bb] & = b_1 b_3 + b_1 b_4 + b_3 b_4,
\end{align}
satisfies the following Painlev\'{e}~VI equation in Jimbo-Miwa-Okamoto $\sigma$-form
\begin{equation} \label{Painleve6}
h'(t)\left(t(1-t)h''(t) \right)^2 + \left(h'(t)[2h - (2t-1)h'(t)] + b_1b_2b_3b_4\right)^2 = \prod_{k=1}^4(h'(t) +b_k^2).
\end{equation}
Furthermore, we have the boundary condition (as $t \rightarrow 0$)
\begin{equation}\label{t=0-BoundCond}
h(t) = \left(-\frac{1}{2}e_2[\bb] -N(b + N) \right) + \left(e_2'[\bb] + \frac{N(N+b)(2N + a + b)}{2N + b} \right)t + O(t^2).
\end{equation}
\end{proposition}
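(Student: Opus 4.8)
The $\sigma$-form Painlev\'e~VI equation \eqref{Painleve6}, together with the expression \eqref{Okamoto} for the auxiliary Hamiltonian in terms of $\tEN$, is exactly Proposition~13 of Forrester and Witte~\cite{ForrWitte04} specialized to the Jacobi unitary ensemble, so the plan for this part is simply to set up the dictionary between their conventions and ours. Concretely, I would match the weight $x^b(1-x)^a$ of \eqref{Ewdefinition} with the two-parameter Jacobi weight used in~\cite{ForrWitte04} (being careful which exponent is attached to the endpoint $x=0$ and which to $x=1$), identify their matrix-size parameter with our $N$, and verify that the four parameters in their Okamoto $\tau$-function correspondence are precisely the entries $b_1,\dots,b_4$ of $\bb$ in \eqref{definitionofbs} and that $e_2[\bb]$, $e_2'[\bb]$ of \eqref{e2}--\eqref{e2p} are the elementary-symmetric combinations entering their Hamiltonian. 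Granting this translation, \eqref{Painleve6} is immediate.

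The substance left to prove is the boundary condition \eqref{t=0-BoundCond}, and here I would compute the small-$t$ behaviour of $\tEN(t)$ directly from \eqref{Ewdefinition}. Substituting $x_j = t\,y_j$ with $y_j\in[0,1]$ gives
\[
\tEN(t) = \tCN\, t^{N(N+b)} F(t), \qquad F(t) := \int_{[0,1]^N} \prod_{j=1}^N y_j^b \prod_{j=1}^N (1-t\,y_j)^a \prod_{1\le j<k\le N}(y_j-y_k)^2\, dy_1\cdots dy_N,
\]
the exponent $N(N+b) = N + Nb + 2\binom{N}{2}$ arising from the $N$ factors of $t\,dy_j$, the $N$ factors $t^b$, and the $2\binom{N}{2}$ powers of $t$ pulled out of the Vandermonde. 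Differentiating under the integral sign shows $F$ is analytic for $|t|<1$, so $F(t) = I_0 + I_1 t + O(t^2)$ with
\[
I_0 = \int_{[0,1]^N} \prod_j y_j^b \prod_{j<k}(y_j-y_k)^2\, dy, \qquad I_1 = -a\int_{[0,1]^N}\bigl(\sum_j y_j\bigr)\prod_j y_j^b\prod_{j<k}(y_j-y_k)^2\, dy,
\]
and since $b>-1$ we have $I_0>0$. Hence $\tfrac{d}{dt}\log\tEN(t) = N(N+b)/t + I_1/I_0 + O(t)$, so that $t(t-1)\tfrac{d}{dt}\log\tEN(t) = -N(N+b) + \bigl(N(N+b) - I_1/I_0\bigr)t + O(t^2)$. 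Plugging this into \eqref{Okamoto} gives $h(0) = -\tfrac12 e_2[\bb] - N(N+b)$ and the coefficient of $t$ equal to $e_2'[\bb] + N(N+b) - I_1/I_0$.

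It remains to evaluate $-I_1/I_0 = a\,M$, where $M$ is the mean of $\sum_j y_j$ with respect to the normalized density proportional to $\prod_j y_j^b\prod_{j<k}(y_j-y_k)^2$ on $[0,1]^N$. By exchangeability $M = N\,\EE[y_1]$, and Aomoto's extension of the Selberg integral (with Selberg parameters $\gamma=1$, $\beta=1$, $\alpha=b+1$) yields $\EE[y_1] = (N+b)/(2N+b)$, whence $-I_1/I_0 = aN(N+b)/(2N+b)$. The coefficient of $t$ in $h$ is therefore
\[
e_2'[\bb] + N(N+b) + \frac{aN(N+b)}{2N+b} = e_2'[\bb] + \frac{N(N+b)(2N+a+b)}{2N+b},
\]
matching \eqref{t=0-BoundCond}. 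I expect no conceptual obstacle here: the main risk is bookkeeping --- getting the exponent of $t$ right under the rescaling, tracking the sign coming from the expansion of $(1-t\,y_j)^a$, and quoting the Aomoto/Selberg evaluation in a normalization consistent with \eqref{Ewdefinition} --- and, likewise, the only real subtlety in the first part is the parameter dictionary with~\cite{ForrWitte04}.
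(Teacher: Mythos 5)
The paper does not actually prove this proposition: it is quoted wholesale from Proposition~13 of Forrester--Witte \cite{ForrWitte04}, with the only "work" being the notational translation (swapping $a$ and $b$, and simplifying the symbols). Your proposal handles the $\sigma$-form equation \eqref{Painleve6} the same way --- by setting up the parameter dictionary with \cite{ForrWitte04} --- but then goes further and independently derives the boundary condition \eqref{t=0-BoundCond}, which the paper simply imports. That derivation is correct: the rescaling $x_j=ty_j$ extracts the exponent $N+Nb+2\binom{N}{2}=N(N+b)$; $F(t)$ is smooth near $t=0$ since $b>-1$ keeps the integrand integrable and $(1-ty_j)^a$ is regular for $|t|<1$; the resulting $t(t-1)\frac{d}{dt}\log\tEN(t)=-N(N+b)+\bigl(N(N+b)-I_1/I_0\bigr)t+O(t^2)$ reproduces $h(0)=-\tfrac12 e_2[\bb]-N(N+b)$; and Aomoto's formula \eqref{Aomoto} with $R=1$, $\rho=b+1$, $\eta=1$, $\gamma=1$ gives $\EE[y_1]=(N+b)/(2N+b)$, so $-I_1/I_0=aN(N+b)/(2N+b)$ and the coefficient of $t$ becomes $e_2'[\bb]+N(N+b)\bigl(1+\tfrac{a}{2N+b}\bigr)=e_2'[\bb]+\tfrac{N(N+b)(2N+a+b)}{2N+b}$ as claimed (this is also exactly the expression hard-coded in the paper's SAGE initialization). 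The trade-off is clear: the paper's citation is shorter but opaque, while your argument makes the $t\to0$ asymptotics verifiable from the Selberg/Aomoto integrals already used elsewhere in the paper; the one part that remains a citation in both treatments is the Painlev\'e~VI equation itself, where the only genuine risk is, as you note, the $a\leftrightarrow b$ convention swap.
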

Note that besides simplifying the notation in Proposition 13 of
\cite{ForrWitte04} we also swap the $a$'s and $b$'s therein. The parameter $a$
is equal to the order of vanishing of the Jacobi level density at the edge
$t=1$, whereas $a+1/2$ is the order of vanishing of eigenphase density at the
edge $\phi=0$. We remark that the apostrophe in the symbol $e_2'$ has no
specific meaning and is merely used to visually distinguish it from~$e_2$ (in a
manner consistent with the notation of reference~\cite{ForrWitte04}), whereas
the apostrophe in $h'$ and elsewhere in this manuscript means differentiation:
$h'(t)=\frac{dh}{dt}$, $h''(t)=\frac{d^2h}{dt^2}$,
${\tEN}{}'(t)=\frac{d}{dt}\tEN(t)$, etc.

Pay close attention to the fact that the initial condition given
in~\eqref{t=0-BoundCond} holds at $t=0$. This condition will be used in
Section~\ref{sec:power-series} to construct a power-series solution. For our
intended application, however, we are most interested in the behaviour of
$\EN(\phi)$ for $\phi$ close to zero, which in view of the change of
variables~\eqref{eq:t-vs-phi} corresponds to $t$ close to~$1$. Unfortunately,
the singularity of the Painlev\'e equation at $t=1$ significantly complicates
the numerical evaluation of the function $h(t)$ in this range.

Our first method of solution will numerically compute $h(t)$ starting instead
from an initial condition given at some fixed point $t=t_0$ close to~$1$, say
$t_0=1-\varepsilon$ for some small positive~$\varepsilon$ to be chosen
empirically. The determination of this suitable initial condition is a delicate
issue that depends on the analysis carried out in
Section~\ref{InitialConditionsJacobi}.

Following Edelman and Persson~\cite{Edel06}, we seek to compute
simultaneously $\tEN(t)$ and the Hamiltonian $h(t)$ via a (non-autonomous)
differential equation for the triple of functions
\begin{equation} \label{vectorH}
H(t)
=\left(
\begin{array}{*{3}{c}}
\tEN(t) \\
h(t) \\
h'(t) \\
\end{array}
\right)
\end{equation}
of the form
\begin{equation} \label{Painlevesystem}
\frac{dH}{dt}
= F_t(H)
\end{equation}
with initial conditions
\begin{equation} \label{InitialConditionsMatLab}
H_0 = H(t_0)
=\left(
\begin{array}{*{3}{c}}
\tEN(t_0) \\
h(t_0) \\
h'(t_0)
\end{array}
\right)
\end{equation}
where $t_0 = 1 - \varepsilon$ for small $\varepsilon > 0$. (The singularity of
the Painlev\'e equation and its solutions at $t_0=1$ preclude taking simply
$\varepsilon=0$.)

From \reff{Okamoto} we obtain
\begin{equation} \label{dHfirstcomponent}
\frac{d}{d t} \tEN(t) = \frac{h(t) - t e_2'[\bb] + \tfrac{1}{2}e_2[\bb]}{t(t-1)} \tEN(t),
\end{equation}
and likewise from \reff{Painleve6}
\begin{equation} \label{dHthirdcomponent}
h''(t) = \frac{1}{t(1-t)} \sqrt{\frac{\prod_{j=1}^4 (h'(t)+b_j^2)-\left(h'(t)[2h-(2t-1)h'(t)] + b_1 b_2 b_3 b_4\right)^2}{h'(t)}}.
\end{equation}
Therefore, (\ref{Painlevesystem}) holds with
\begin{equation}
  \label{F(H)}
  F_t
  \begin{pmatrix}
    h_1(t) \\ \ \\ h_2(t) \\ \ \\ h_3(t)
  \end{pmatrix}
  =
  \begin{pmatrix}
    \dfrac{h_2(t) - t e_2'[\bb] + \tfrac{1}{2}e_2[\bb]}{t(t-1)} h_1(t)\\ \ \\
    h_3(t) \\ \ \\
    \dfrac{1}{t(1-t)} \sqrt{\dfrac{\prod_{j=1}^4
        (h_3(t)+b_j^2)-\left(h_3(t)[2h_2(t)-(2t-1)h_3(t)] + b_1 b_2 b_3 b_4\right)^2}{h_2(t)}}
 \end{pmatrix}.
\end{equation}
The MATLAB code to compute $F_t(H)$ is given in the appendix and
is also available for download at \url{http://www.maths.bris.ac.uk/~mancs/publications.html}.
We employ the built-in ordinary differential solver \texttt{ode45} from MATLAB, which
implements a Runge-Kutta method giving an approximate solution
of~\reff{Painlevesystem} (and thus of the sought density $\nuN(\phi) = -
\tfrac{d}{d\phi}\EN(\phi)$ of the distribution of the first eigenphase).
It remains still to determine the initial condition $H_0=H(t_0)$ for
\reff{vectorH} as follows.
We shall find the small-$\varepsilon$ asymptotic behavior of $\tEN(1-\varepsilon)$
(equivalently, what we will actually do is find the small-$\phi$ asymptotics of $\EN(\phi)$).
By differentiation we then find
\begin{equation}
\frac{d}{dt}\tEN(t)\quad
\mbox{and}\quad
\frac{d^2}{dt^2}\tEN(t)
\end{equation}
and thus (through its definition~\eqref{Okamoto}) we obtain asymptotically good
approximations to $h(t_0)$ and $h'(t_0)$ for any $t_0$ close to 1. This gives
the triple $H_0$ of initial conditions for~\reff{vectorH}. In the following
section we compute the asymptotic behavior of $\EN(\phi)$ for $\phi$ close to~0.


\section{Taylor series expansion for $\EN(\phi)$}
  \label{InitialConditionsJacobi} In this section we compute the
probability $\ENf$ that a random matrix from a Jacobi ensemble $J_N$ as defined
in Section \ref{SectionJacobiEnsemble}
has no eigenphase in the interval $[0, \phi]$ for small $\phi >
0$. As described at the end of Section
\ref{Method1-PainleveEq}, this enables us to
derive the initial conditions for the system of differential
equations \reff{Painlevesystem} which gives the distribution of
the first eigenphase \reff{distributionoffirsteigenphases}.
Forrester and Witte \cite{ForrWitte04} consider this same limit in
their equation (1.38), but here we derive a further term in the
approximation.  Our result is stated in Proposition
\ref{InitialConditionProposition}.

We require some notation.
For $n = 1, \ldots, N$ and $\alpha,\beta > -1/2$ we define the integral
\begin{equation}\label{Ih}
  \begin{split}
    I(n) & \ :=\   \CN  \underbrace{\int_0^\pi \cdots
      \int_0^\pi}_{N - n \text{~times}} \underbrace{\int_0^\phi \cdots
      \int_0^\phi}_{n \text{~times}} \prod_{j = 1}^N (1 -\cos\phi_j)^\alpha (1
    +\cos\phi_j)^\beta\\
    & \quad \ \ \ \ \ \ \times\ \prod_{1 \leq j < k \leq N} (\cos\phi_j - \cos\phi_k)^2 d \phi_1
    \cdots d \phi_N.
  \end{split}
\end{equation}

Then we have the following lemmata:
\begin{lemma} \label{lemmaEGN} For $\EN(\phi)$ as given in \reff{EGN} and
  for $I(n)$ as defined in~\reff{Ih} we have
\begin{equation} \label{lemmaEGNequation}
\EN( \phi)\ =\ 1 - N \cdot I(1) + {N \choose 2} I(2) - {N \choose 3} I(3) + \cdots + (-1)^N I(N).
\end{equation}
\end{lemma}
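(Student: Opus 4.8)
The plan is to deduce \reff{lemmaEGNequation} from a direct decomposition of the domain of integration in \reff{EGN}, followed by an inclusion--exclusion regrouping of the resulting pieces. Everything hinges on one structural observation about the integrand of \reff{EGN} (equivalently, the joint density \reff{JacobiPDF}): writing
\begin{equation*}
g(\phi_1,\dots,\phi_N) \ :=\ \CN \prod_{j=1}^N (1-\cos\phi_j)^\alpha(1+\cos\phi_j)^\beta \prod_{1\le j<k\le N}(\cos\phi_j-\cos\phi_k)^2,
\end{equation*}
the function $g$ is symmetric under permutations of $(\phi_1,\dots,\phi_N)$ --- the Vandermonde-type factor enters squared, so no sign is picked up --- it is non-negative on $[0,\pi]^N$, and $\int_{[0,\pi]^N} g\,d\phi_1\cdots d\phi_N = 1$ by the very definition of the normalization constant $\CN$.

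First I would split each one-dimensional integral as $\int_\phi^\pi = \int_0^\pi - \int_0^\phi$ and substitute this into \reff{EGN}. Expanding the product of $N$ such differences produces $2^N$ terms, naturally indexed by the subset $S\subseteq\{1,\dots,N\}$ of coordinates at which the summand $-\int_0^\phi$ was selected; the term attached to $S$ carries the sign $(-1)^{|S|}$ and equals the integral of $g$ over the box $\{\phi_j\in[0,\phi]\text{ for }j\in S,\ \phi_j\in[0,\pi]\text{ for }j\notin S\}$. (The separation of the iterated integral into one-dimensional pieces is justified by Fubini's theorem, $g$ being non-negative with finite total integral; overlap at the single point $\phi_j=\phi$ is a null set and thus irrelevant.) Since $g$ is symmetric, this last integral depends only on $n:=|S|$ and not on which coordinates belong to $S$: for $1\le n\le N$ it is precisely $I(n)$ of \reff{Ih}, while for $n=0$ it is $\int_{[0,\pi]^N} g = 1$.

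Finally I would gather the $2^N$ terms according to $n=|S|$. There are exactly $\binom{N}{n}$ subsets of cardinality $n$, so
\begin{equation*}
\EN(\phi) \ =\ \sum_{n=0}^N (-1)^n \binom{N}{n} I(n) \ =\ 1 - N\,I(1) + \binom{N}{2} I(2) - \cdots + (-1)^N I(N),
\end{equation*}
which is \reff{lemmaEGNequation} (with the harmless convention $I(0)=1$). No genuine obstacle arises; the only points demanding a little care are the permutation-invariance of $g$ --- this is what collapses all size-$n$ contributions to the single integral $I(n)$ --- and the applicability of Fubini when separating the coordinates. Equivalently, one may run the argument probabilistically: letting $B_j$ be the event that the $j$-th eigenphase of $A\in J_N$ lies in $[0,\phi]$, one has $\EN(\phi) = P\big(\bigcap_{j=1}^N B_j^{\,c}\big) = 1 - P\big(\bigcup_{j=1}^N B_j\big)$, and the classical inclusion--exclusion formula together with $P\big(\bigcap_{j\in S} B_j\big) = I(|S|)$ (again by symmetry) yields \reff{lemmaEGNequation} at once.
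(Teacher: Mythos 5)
Your proof is correct and, in substance, identical to the paper's: the paper proves the lemma via exactly the probabilistic inclusion--exclusion argument you sketch in your final sentence (events $B_k$, symmetry giving $P(B_{i_1}\cap\cdots\cap B_{i_k})=I(k)$), and your primary derivation by expanding $\prod_j\bigl(\int_0^\pi-\int_0^\phi\bigr)$ is just the same inclusion--exclusion written out at the level of integrals.
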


\newcommand{\al}{\alpha}
\begin{lemma} \label{lemmaIone}
For $I(1)$ as defined in \reff{Ih} we have
\begin{equation} \label{Ione}
I(1)\ =\ H_1 \frac{\phi^{2\al+1}}{2\al+1} - \left[(N-1) H_2 + \bigg(\frac{\al}{12} + \frac{\beta}{4} \bigg) H_1 \right] \frac{\phi^{2\al +3}}{2\al+3} + O(\phi^{2\al+4})
\end{equation}
where
\begin{equation} \label{Hone}
H_1 \ := \frac{\Gamma(\al + N + 1/2) \Gamma(\al + \beta + N)}{2^{2\al}\Gamma(\al + 1/2) \Gamma(\al + 3/2) \Gamma(N + 1) \Gamma(\beta + N - 1/2)}
\end{equation}
and
\begin{equation} \label{Htwo}
H_2\ :=\ \frac{\Gamma(\al + N + 1/2) \Gamma(\al + \beta + N + 1)}{2^{2\al+1}\Gamma(N + 1) \Gamma(\beta + N - 1/2) \Gamma(\al + 1/2) \Gamma(\al + 5/2)}.
\end{equation}
\end{lemma}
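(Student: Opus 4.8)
The plan is to separate out the integration over a single angular variable, Taylor-expand the integrand there about the edge $\phi_1=0$, integrate in $\phi_1$ term by term, and recognize the surviving $(N-1)$- and $N$-fold integrals as Selberg and Aomoto integrals.

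Concretely, I would first write
\begin{equation}
I(1)=\CN\int_0^\phi (1-\cos\phi_1)^\alpha(1+\cos\phi_1)^\beta\,J(\cos\phi_1)\,d\phi_1,
\end{equation}
where $J(c)$ denotes the remaining $(N-1)$-fold integral over $(0,\pi)^{N-1}$ in the variables $\phi_2,\dots,\phi_N$, into which $c=\cos\phi_1$ enters only through the factors $\prod_{k=2}^N(c-\cos\phi_k)^2$. For the one-variable prefactor, the expansions $1-\cos\phi_1=\tfrac{\phi_1^2}{2}\bigl(1-\tfrac{\phi_1^2}{12}+O(\phi_1^4)\bigr)$ and $1+\cos\phi_1=2\bigl(1-\tfrac{\phi_1^2}{4}+O(\phi_1^4)\bigr)$ give
\begin{equation}
(1-\cos\phi_1)^\alpha(1+\cos\phi_1)^\beta=2^{\beta-\alpha}\phi_1^{2\alpha}\Bigl(1-\bigl(\tfrac{\alpha}{12}+\tfrac{\beta}{4}\bigr)\phi_1^2+O(\phi_1^4)\Bigr),
\end{equation}
which is already the source of the term $\bigl(\tfrac{\alpha}{12}+\tfrac{\beta}{4}\bigr)H_1$ in~\eqref{Ione}. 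For the inner integral, $\cos\phi_1=1-\tfrac{\phi_1^2}{2}+O(\phi_1^4)$ lets me write $J(\cos\phi_1)=J(1)-\tfrac{\phi_1^2}{2}J'(1)+O(\phi_1^4)$; differentiating the product $\prod_k(c-\cos\phi_k)^2$ at $c=1$ and using permutation symmetry in $\phi_2,\dots,\phi_N$ shows that $J(1)$ is the $(N-1)$-level Jacobi normalization-type integral with the exponent of $(1-\cos\phi_j)$ raised from $\alpha$ to $\alpha+2$, while $J'(1)$ equals $2(N-1)$ times the same integral with that exponent $\alpha+2$ lowered to $\alpha+1$ on one single variable.

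Then $\int_0^\phi\phi_1^{2\alpha+2m}\,d\phi_1=\phi^{2\alpha+2m+1}/(2\alpha+2m+1)$ converts the combined expansion into exactly the two displayed terms of~\eqref{Ione}, the discarded tail being $O(\phi^{2\alpha+5})\subset O(\phi^{2\alpha+4})$; this already identifies $H_1=\CN\,2^{\beta-\alpha}J(1)$ and $H_2$ with $\CN\,2^{\beta-\alpha}$ times the lowered-exponent integral (the leading term agrees with equation~(1.38) of~\cite{ForrWitte04}, the $\phi^{2\alpha+3}$ term being the new one). It then remains only to evaluate these integrals in closed form: after the substitution $x_j=(1+\cos\phi_j)/2$ and setting $a=\alpha-\tfrac12$, $b=\beta-\tfrac12$, $J(1)$ becomes a Selberg integral on $N-1$ variables, the lowered-exponent integral becomes an Aomoto integral (one shifted exponent, equivalently $N-2$ of the $N-1$ variables carrying an extra linear factor, which is precisely the case evaluated by Aomoto's extension of Selberg's formula), and the constant $\CN$ is pinned down by the $N$-variable Selberg integral. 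Collecting the three and simplifying the resulting ratio of Gamma factors via $\Gamma(z+1)=z\Gamma(z)$ should reproduce~\eqref{Hone} and~\eqref{Htwo} verbatim.

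The expected main obstacle is purely computational: getting $J'(1)$ and the parameters of the Aomoto integral exactly right, and then pushing through the Gamma-function bookkeeping so that the constants emerge in precisely the normalized forms $H_1$ and~$H_2$. A minor analytic point to dispatch is that the Taylor remainders of the one-variable prefactor and of $J(\cos\phi_1)$ are uniformly $O(\phi_1^4)$ on $[0,\phi]$, and so contribute only $O(\phi^{2\alpha+5})$ after integration; this is routine, since all exponents occurring ($\alpha,\alpha+1,\alpha+2>-1$ and $\beta>-\tfrac12$) keep every integral absolutely convergent.
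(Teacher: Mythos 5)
Your proposal is correct and follows essentially the same route as the paper: isolate the $\phi_1$ integration, Taylor-expand the weight $(1-\cos\phi_1)^\alpha(1+\cos\phi_1)^\beta$ and the cross-product $\prod_k(\cos\phi_1-\cos\phi_k)^2$ about $\phi_1=0$ (your $J(1)$, $J'(1)$ formulation is just a repackaging of the paper's direct expansion of the product), integrate term by term, and evaluate the resulting $(N-1)$-fold integrals by Selberg (for the $\alpha\to\alpha+2$ shift) and Aomoto with $R=N-2$ (for the single lowered exponent), normalizing by the $N$-variable Selberg integral. The identifications of $H_1$, $H_2$ and the error term all match the paper's computation.
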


\begin{lemma} \label{lemmaIH}
For $I(n)$ as defined in~\reff{Ih} we have for $n \geq 2$ and $\alpha > -1/2$
\begin{equation}
I(n)\ \ll\ \phi^{2\al n + 2n^2 -n}.
\end{equation}
\end{lemma}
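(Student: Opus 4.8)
The plan is a straightforward rescaling estimate. By the permutation symmetry of the integrand in \reff{Ih}, I may assume the $n$ variables confined to $[0,\phi]$ are $\phi_1,\dots,\phi_n$ and that $\phi_{n+1},\dots,\phi_N$ range over $[0,\pi]$; since the assertion is asymptotic as $\phi\to0$, I may also assume $0<\phi\le1$. The idea is to bound every factor that does not become small with $\phi$ by a $\phi$-independent constant, to keep the factors $(1-\cos\phi_j)^\alpha$ for the inner variables and the Vandermonde factors $(\cos\phi_j-\cos\phi_k)^2$ between two inner variables, and then to read off the power of $\phi$ after the substitution $\phi_j=\phi u_j$ for $j\le n$.

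First I would handle the outer variables and the mixed Vandermonde factors. Since $\alpha,\beta>-1/2$, the single-variable integral $\int_0^\pi(1-\cos\psi)^\alpha(1+\cos\psi)^\beta\,d\psi$ converges, so $\prod_{j=n+1}^N(1-\cos\phi_j)^\alpha(1+\cos\phi_j)^\beta$ integrates over $[0,\pi]^{N-n}$ to a finite constant; and every Vandermonde factor $(\cos\phi_j-\cos\phi_k)^2$ with $k\ge n+1$ satisfies $(\cos\phi_j-\cos\phi_k)^2\le4$. For the inner variables, $0\le\phi_j\le\phi\le1$ gives $1+\cos\phi_j\in[1+\cos1,2]$, so $(1+\cos\phi_j)^\beta$ is bounded above by a constant; and since $\psi\mapsto(1-\cos\psi)/\psi^2$ is continuous and lies between two positive constants on $(0,\pi]$, we get $(1-\cos\phi_j)^\alpha\le C_\alpha\,\phi_j^{2\alpha}$ for some constant $C_\alpha$ — this is the point that accommodates $-1/2<\alpha<0$ as well as $\alpha\ge0$. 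Finally, $\cos\phi_j-\cos\phi_k=-2\sin\frac{\phi_j+\phi_k}{2}\sin\frac{\phi_j-\phi_k}{2}$ together with $|\sin x|\le|x|$ yields $(\cos\phi_j-\cos\phi_k)^2\le\tfrac14(\phi_j^2-\phi_k^2)^2$ for $1\le j<k\le n$. Collecting these bounds and absorbing the normalization constant $\CN$, I obtain $I(n)\le C(\alpha,\beta,N)\,Q_n(\phi)$ with $C(\alpha,\beta,N)$ independent of $\phi$ and
\[
Q_n(\phi)=\int_0^\phi\cdots\int_0^\phi\prod_{j=1}^n\phi_j^{2\alpha}\prod_{1\le j<k\le n}(\phi_j^2-\phi_k^2)^2\,d\phi_1\cdots d\phi_n .
\]

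Then I would compute the order of $Q_n(\phi)$ via the scaling $\phi_j=\phi u_j$, which gives $Q_n(\phi)=K_n\,\phi^{\,n+2\alpha n+4\binom{n}{2}}=K_n\,\phi^{\,2\alpha n+2n^2-n}$, where $K_n=\int_{[0,1]^n}\prod_{j=1}^n u_j^{2\alpha}\prod_{1\le j<k\le n}(u_j^2-u_k^2)^2\,du_1\cdots du_n$. Bounding the Vandermonde factors by $1$ on $[0,1]^n$ shows $K_n\le\bigl(\int_0^1 u^{2\alpha}\,du\bigr)^n<\infty$ precisely because $\alpha>-1/2$. Hence $I(n)\ll\phi^{2\alpha n+2n^2-n}$, as claimed.

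I do not expect a genuine obstacle here: the argument is elementary bookkeeping. The only places requiring care are the comparison $(1-\cos\psi)^\alpha\le C_\alpha\psi^{2\alpha}$ controlling the inner weight for negative $\alpha$, and the verification that the $\phi$-independent constants (the outer single-variable weight integral and $K_n$) are finite — which is exactly where the hypotheses $\alpha>-1/2$ and $\beta>-1/2$ (the latter a standing assumption of this section) enter. The same computation also yields $I(1)\ll\phi^{2\alpha+1}$, consistent with the sharper Lemma~\ref{lemmaIone}.
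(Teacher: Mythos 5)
Your proof is correct, and it reaches the stated exponent by a route that is genuinely more elementary and more self-contained than the paper's. The paper does not decouple the inner and outer variables: it bounds only the inner--inner Vandermonde factors, uniformly by $(\cos\phi_j-\cos\phi_k)^2\le(1-\cos\phi)^2$ (which, like your $\tfrac14(\phi_j^2-\phi_k^2)^2$, contributes $\phi^4$ per pair, i.e.\ $\phi^{2n(n-1)}$ in total), and then reuses verbatim the exact small-$\phi$ evaluation \reff{integralidentity} of the single inner integral established inside the proof of Lemma~\ref{lemmaIone}; the factors $\prod_{j=n+1}^N(1-\cos\phi_j)^{2n}$ produced by the mixed Vandermonde terms are kept and absorbed into an outer Selberg integral with shifted exponent $\al+2n$, which supplies the finite constant. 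You instead discard the mixed factors outright via $(\cos\phi_j-\cos\phi_k)^2\le4$, bound the inner weight by $C_\alpha\phi_j^{2\alpha}$ and the inner Vandermonde by $\tfrac14(\phi_j^2-\phi_k^2)^2$, and read off the exponent $n+2\al n+2n(n-1)=2\al n+2n^2-n$ from the dilation $\phi_j=\phi u_j$. What your version buys is independence from the Selberg/Aomoto apparatus for this lemma (only the convergence of the one-variable weight integral for $\alpha,\beta>-1/2$ is needed), and the explicit handling of $-1/2<\alpha<0$ via the two-sided comparison $(1-\cos\psi)\asymp\psi^2$ is a point the paper leaves implicit; what the paper's version buys is that it tracks the mixed Vandermonde structure exactly, which in principle yields sharper implied constants and dovetails with the computation already done for $I(1)$. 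Since only the order in $\phi$ is needed here, both are equally adequate.
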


We postpone the proof of the above lemmata for a moment in order to state the desired Taylor series expansion of $E_N^{(\alpha,\beta)}(\phi)$ for small $\phi>0$.

\begin{proposition} \label{InitialConditionProposition}
For $\EN(\phi)$ as given in \reff{EGN} we have
\begin{equation}
  \begin{split}
    \EN(\phi) &\ =\ 1 - N \bigg(H_1 \frac{\phi^{2\al+1}}{2\al+1} -
    \left[(N-1) H_2 + \bigg(\frac{\al}{12} + \frac{\beta}{4} \bigg) H_1 \right]
    \frac{\phi^{2\al +3}}{2\al+3}\bigg) \\
    & \quad \ \ \ +\ O(\phi^{2\al+4})
  \end{split}
\end{equation}
where $H_1$ is defined in~\reff{Hone} and $H_2$ in~\reff{Htwo}.
\end{proposition}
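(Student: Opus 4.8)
The plan is to read off Proposition~\ref{InitialConditionProposition} as a corollary of the three lemmas. Lemma~\ref{lemmaEGN} expresses
\begin{equation*}
\EN(\phi) = 1 - N\,I(1) + \sum_{k=2}^{N} (-1)^k \binom{N}{k} I(k),
\end{equation*}
so I only need two things: that the entire sum over $k \geq 2$ is absorbed into the error term $O(\phi^{2\alpha+4})$, and that $I(1)$ is replaced by its two-term expansion from Lemma~\ref{lemmaIone}. The second is an immediate substitution of~\reff{Ione}; the first is where a little care is needed.

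For the tail, Lemma~\ref{lemmaIH} gives $I(k) \ll \phi^{2\alpha k + 2k^2 - k}$ for every $k \geq 2$. I would regard the exponent $g(k) = 2\alpha k + 2k^2 - k$ as a function of a real variable $k \geq 2$; since $g'(k) = 2\alpha + 4k - 1 > 0$ for all $k \geq 2$ when $\alpha > -\tfrac12$, the exponent is increasing and hence minimized at $k = 2$, where $g(2) = 4\alpha + 6$. Because $\alpha > -\tfrac12 > -1$ we have $4\alpha + 6 > 2\alpha + 4$, so each of the finitely many terms $\binom{N}{k} I(k)$ with $2 \leq k \leq N$ is $O(\phi^{2\alpha+4})$, and therefore so is their sum. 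Plugging this, together with \reff{Ione}, into the identity of Lemma~\ref{lemmaEGN} and collecting terms gives exactly the asserted formula, with $H_1$ and $H_2$ as in \reff{Hone} and \reff{Htwo}.

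In other words, the proposition itself is pure bookkeeping once the lemmas are in hand, so the real obstacles lie in the lemmas. Lemma~\ref{lemmaEGN} is a standard inclusion--exclusion over which of the $N$ eigenphases fall in $[0,\phi]$, using the exchangeability of the levels to collapse the sum over $k$-subsets into $\binom{N}{k}I(k)$. Lemma~\ref{lemmaIH} only requires a crude majorization of the integrand of $I(k)$ on the cube $[0,\phi]^k$ (bounding each small-variable weight factor $(1-\cos\phi_j)^\alpha \ll \phi_j^{2\alpha}$ and each Vandermonde factor involving two small variables by $\ll \phi_j^2\phi_k^2$) followed by integrating $k$ variables over $[0,\phi]$, which produces the exponent $2\alpha k + 2k^2 - k$. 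The genuinely delicate step --- and the one I expect to be the main obstacle --- is Lemma~\ref{lemmaIone}: one must expand $(1-\cos\phi_1)^\alpha(1+\cos\phi_1)^\beta$ and the Vandermonde factor $\prod_{j<k}(\cos\phi_j - \cos\phi_k)^2$ to \emph{two} orders in the single small variable $\phi_1$, integrate $\phi_1$ against $\phi_1^{2\alpha}$ and against $\phi_1^{2\alpha+2}$, and recognize the remaining $(N-1)$-fold integrals over $[0,\pi]^{N-1}$ as Jacobi normalization integrals evaluable in closed form via Selberg--Aomoto; it is in tracking the second-order coefficient and the precise Gamma-factor normalization that the constant $\tfrac{\alpha}{12} + \tfrac{\beta}{4}$ (and the passage from $H_1$ to $H_2$) must be pinned down, and this is where an arithmetic slip would be easiest to make.
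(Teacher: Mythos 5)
Your proposal is correct and follows essentially the same route as the paper: substitute the two-term expansion of $I(1)$ from Lemma~\ref{lemmaIone} into the inclusion--exclusion identity of Lemma~\ref{lemmaEGN}, and absorb every $I(n)$ with $n\ge 2$ into the error term via Lemma~\ref{lemmaIH}, using that $2\alpha n + 2n^2 - n \ge 4\alpha+6 > 2\alpha+4$ for $n\ge 2$ and $\alpha>-\tfrac12$. Your monotonicity check of the exponent is a slightly more explicit version of the paper's one-line inequality, and your closing remarks about where the real work lies (in Lemma~\ref{lemmaIone}) match the structure of the paper.
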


\begin{proof} Let $n \geq 2$. We can apply Lemma \ref{lemmaIH} to every term
  $I(n)$ in~\reff{lemmaEGNequation} of Lemma~\ref{lemmaEGN}. Thus, $I(n) \ll
  \phi^{2\al n + 2n^2 -n}$. As $n \ge 2$, we have $2\al n + 2n^2 - n \geq 2\al + 4$ for every
  $\alpha > -1/2$, hence $I(n) \ll \phi^{2\al + 4}$. Thus every $I(n)$ in Lemma
  \ref{lemmaEGN} can be absorbed into the error term $O(\phi^{2 \al + 4})$ of $I(1)$ in
  \reff{Ione} of Lemma \ref{lemmaIone}.
\end{proof}

Now we prove Lemmata~\ref{lemmaEGN}, \ref{lemmaIone} and~\ref{lemmaIH}.
\begin{proof}[Proof of Lemma \ref{lemmaEGN}]
Recall that $\EN(\phi)$ is the probability of having no eigenphases in the interval $[0,\phi]$.
We denote the event that there is at least one eigenphase in $[0,\phi]$ by $\mathcal{B}$.
Then its complement $\complement\mathcal{B}$ is the event of having no eigenphases in $[0, \phi]$.
Hence
\begin{equation}
\EN(\phi) = P(\complement{\mathcal{B}}) = 1 - P({\mathcal{B}}).
\end{equation}

We now focus on $P({\mathcal{B}})$.
Let
\begin{equation}
B_k := \{(\phi_1, \ldots, \phi_k, \ldots, \phi_N) \in [0,\pi]^N \text{~where~} \phi_k \in [0,\phi] \},
\end{equation}
which is the event that $\phi_k$ lies in $[0,\phi]$ and the remaining eigenphases lie
anywhere in $[0, \pi]$. Note that $B_j$ and $B_k, j\not=k$, are not necessarily disjoint.

We can write the event of
having at least one eigenphase in $[0,\phi]$ as
\begin{equation}
{\mathcal{B}} = \bigcup_{k = 1}^N B_k.
\end{equation}

For the probability of the event $B_k$ we have
\begin{equation} \label{normalizationconstant}
  \begin{split}
    P(B_k)  = \CN \int_{B_k}
    \prod_{j = 1}^N (1 -\cos\phi_j)^\alpha (1 +\cos\phi_j)^\beta
    \prod_{1 \leq j < k \leq N} (\cos\phi_j - \cos\phi_k)^2\,d \phi_j.
  \end{split}
\end{equation}

For any $k$ and any $j\ne k$ we have
\begin{equation}
P(B_k)\ = \ I(1)\ \ \quad\text{and}\quad \ \
P(B_j \cap B_k) \ =\ I(2),
\end{equation}
and in general, for $1\leq i_1 < \cdots < i_k \leq N$,
\begin{equation}
P(B_{i_1} \cap \cdots \cap B_{i_k})\ =\ I(k).
\end{equation}
By the inclusion-exclusion principle and the symmetry above,
\begin{equation}
  \begin{split}
    P({\mathcal{B}}) &\ =\ P \left(\bigcup_{k = 1}^N B_k\right) = \sum_{k = 1}^N
    (-1)^{k + 1} \sum_{1 \leq i_1 < \cdots < i_k \leq N} P(B_{i_1} \cap \cdots
    \cap B_{i_k}) \\
    &\ =\ N \cdot P(B_1) - {N \choose 2} P(B_1 \cap B_2) + {N \choose 3} P(B_1 \cap B_2 \cap B_3) \\
    &\ \ \quad\ +\cdots + (-1)^{N + 1}P(B_1 \cap \cdots \cap B_N).
  \end{split}
\end{equation}
Thus
\begin{align}
P({\mathcal{B}})\ =\ N \cdot I(1) - {N \choose 2}I(2) + {N \choose 3} I(3) + \cdots + (-1)^{N + 1} I(N).
\end{align}

Finally, the probability of having no eigenphases in $[0, \phi]^N$ is given by
\begin{align}\nonumber
P(\complement{\mathcal{B}}) & = {\CN} \int_\phi^\pi \cdots \int_\phi^\pi \prod_{j = 1}^N (1 -\cos\phi_j)^\alpha (1 +\cos\phi_j)^\beta \\ \nonumber
&\ \ \ \ \times \prod_{1 \leq j < k \leq N} (\cos\phi_j - \cos\phi_k)^2 d \phi_1 \cdots d \phi_N\\ \nonumber
& = 1 - P({\mathcal{B}})\\ \label{finally}
& = 1 - N \cdot I(1) + {N \choose 2}I(2) - {N \choose 3} I(3) + \cdots + (-1)^{N} I(N).
\end{align}

\end{proof}


\begin{proof}[Proof of Lemma \ref{lemmaIone}]
First we recall Selberg's integral (see Chapter 17 of~\cite{Mehta91}):
\begin{align}\nonumber
\mathcal{S}_{\mathcal{N}}(\rho,\eta;\gamma) &:= \int_0^1 dx_1
\cdots \int_0^1 dx_\calN \prod_{l = 1}^\calN x_l^{\rho -
1}(1-x_l)^{\eta - 1} \prod_{1 \leq j < k \leq \calN} |x_k -
x_j|^{2\gamma} \\ \label{Selberg} & = \prod_{j = 0}^{\calN - 1}
\frac{\Gamma(1 + \gamma + j\gamma) \Gamma(\rho + j\gamma)
\Gamma(\eta + j\gamma)}{\Gamma(1 + \gamma)\Gamma(\rho + \eta +
[\calN + j - 1]\gamma)}
\end{align}
and Aomoto's extension of Selberg's integral for $1 \leq R \leq \calN$
\begin{align}\nonumber
& \int_0^1 dx_1 \cdots \int_0^1 dx_\calN \prod_{j = 1}^Rx_j
\prod_{l = 1}^\calN x_l^{\rho - 1}(1-x_l)^{\eta - 1} \prod_{1 \leq
j < k \leq \calN} |(x_j - x_k)|^{2\gamma} \\ \label{Aomoto} & =
\prod_{j = 1}^R \frac{\rho + (\calN-j)\gamma}{\rho + \eta +
(2\calN -j -1)\gamma}  \prod_{j = 0}^{\calN - 1}
\frac{\Gamma(1 + \gamma + j\gamma) \Gamma(\rho + j\gamma)
\Gamma(\eta + j\gamma)}{\Gamma(1 + \gamma)\Gamma(\rho + \eta +
[\calN + j - 1]\gamma)},
\end{align}
both valid for integer $\calN$ and complex $\rho,\eta, \gamma$
with
\begin{align}
  \RRe(\rho) &> 0, &
  \RRe(\eta) &> 0, &
  \RRe(\gamma) &>
  -\min\bigg(\frac{1}{\calN}, \frac{\RRe(\rho)}{(\calN-1)},
  \frac{\RRe(\eta)}{(\calN-1)} \bigg).
\end{align}

The version of Selberg's integral we are interested in is related
to~\eqref{Selberg} by
\begin{align*}
\rho & = s + 1/2,& \eta & = r + 1/2, & \gamma & = 1,
\end{align*}
and the change of variables
\begin{equation*}
y_j = \frac{1 + \cos\phi_j}{2}
\end{equation*} as follows (note $\Gamma(2)=1$):
\begin{equation} \label{MySelberg}
  \begin{split}
    & \int_0^\pi d\phi_1 \cdots \int_0^\pi d \phi_\calN \prod_{l = 1}^\calN (1
    -\cos\phi_l)^r (1 + \cos\phi_l)^s
    \prod_{1 \leq j < k \leq \calN} (\cos\phi_j - \cos\phi_k)^2\\
    & = 2^{\calN(\calN + r + s - 1)}   \int_0^1 dy_1 \cdots \int_0^1
    dy_\calN \prod_{l = 1}^\calN (1-y_l)^{r -1/2}y_l^{s -1/2}
    \prod_{1 \leq j < k \leq \calN} (y_j - y_k)^2\\
    & = 2^{\calN(\calN + r + s - 1)}   \prod_{j=0}^{\calN-1}\frac{\Gamma(2
      +j)\Gamma(s + 1/2+j) \Gamma(r + 1/2 + j)}{ \Gamma(s + r + \calN +
      j)}.
  \end{split}
\end{equation}

The version of Aomoto's extension of our interest has parameters
\begin{align*}
\rho & = r + 1/2,& \eta & = s + 1/2, & \gamma & = 1,
\end{align*}
and we change variables
\begin{equation*}
z_j = \frac{1 - \cos\phi_j}{2}
\end{equation*} in~(\ref{Aomoto}) to obtain
{\allowdisplaybreaks
\begin{equation} \label{MyAomoto}
  \begin{split}
    & \int_0^\pi d\phi_1 \cdots \int_0^\pi d \phi_\calN \prod_{k = 1}^R (1
    -\cos\phi_k)
    \prod_{l = 1}^\calN (1 -\cos\phi_l)^r (1 + \cos\phi_l)^s  \prod_{1 \leq j < k \leq N} (\cos\phi_j - \cos\phi_k)^2\\
    & = 2^{R + \calN(\calN + r + s -1)}   \int_0^1 dz_1 \cdots \int_0^1 dz_\calN \prod_{k = 1}^R z_k \prod_{l = 1}^\calN z_l^{r -1/2}(1-z_l)^{s-1/2}   \prod_{1 \leq j < k \leq \calN} (z_j - z_k)^2 \\
    & = 2^{R + \calN(\calN + r + s -1)}
    \prod_{j = 1}^R \frac{r + 1/2 + \calN - j}{r + s + 2\calN - j}
   \prod_{j = 0}^{N - 1} \frac{\Gamma(2 + j) \Gamma(r + 1/2 +
      j) \Gamma(s + 1/2 + j)}{\Gamma(r + s + \calN + j)}.
  \end{split}
\end{equation}
}

We can now determine the normalization constant $\CN$
in~(\ref{normalizationconstant}).  We have
\begin{equation}
{\CN}^{-1} =  \int_0^\pi d \phi_1 \cdots \int_0^\pi d \phi_N \prod_{j = 1}^N (1 -\cos\phi_j)^\alpha (1 + \cos\phi_j)^\beta
\prod_{1 \leq j < k \leq N} (\cos\phi_j - \cos\phi_k)^2. \label{constantinverse}
\end{equation}
By setting $\calN = N, r = \alpha$, and $s = \beta$ in (\ref{MySelberg}) we obtain
\begin{equation}\label{constant}
{\CN}^{-1} = 2^{N(N + \alpha + \beta -1)}  \prod_{j=0}^{N-1}\frac{\Gamma(2 +j)\Gamma(\beta + 1/2+j) \Gamma(\alpha + 1/2 + j)}{ \Gamma(\alpha + \beta + N + j)}.
\end{equation}

Now, for small $\phi > 0$ and using Selberg's integral (\ref{MySelberg}) we wish to evaluate
\begin{equation} \label{Ionedefinition}
I(1) = {\CN} \tilde{I}(1)
\end{equation}
where
\begin{align*}
\tilde{I}(1)\ :=\ \int_0^\pi \cdots \int_0^\pi \int_0^\phi K(\phi_2,\dots,\phi_N) H(\phi_1,\dots,\phi_N) d \phi_1 d \phi_2 \cdots d \phi_N
\end{align*}
with
\begin{equation}\label{Kterm}
K(\phi_2,\dots,\phi_N)\ :=\ \prod_{j = 2}^N (1 -\cos\phi_j)^\alpha (1 + \cos\phi_j)^\beta \prod_{2 \leq j < k \leq N} (\cos\phi_j - \cos\phi_k)^2,
\end{equation}
and
\begin{align}\label{helpterm}
H(\phi_1,\dots,\phi_N)\ :=\ (1 -\cos\phi_1)^\alpha (1 + \cos\phi_1)^\beta \prod_{k = 2}^N (\cos\phi_1 - \cos\phi_k)^2.
\end{align}

Now we evaluate $H(\phi_1,\dots,\phi_N)$. The Taylor expansion
around $\phi_1=0$ of the first factor of $H(\phi_1,\dots,\phi_N)$
in (\ref{helpterm}) is
\begin{align} \label{Taylor1}
  \begin{split}
     (1 - \cos\phi_1)^\alpha (1 + \cos\phi_1)^\beta
    &\ =\ \left[\frac{\phi_1^{2\al}}{2^\al} - \al\frac{\phi_1^{2\al + 2}}{2^\al\cdot 12} + O(\phi_1^{2\al + 4})\right] \left[2^\beta - \beta2^{\beta-2}\phi_1^2 + O(\phi_1^4) \right]\\
    &\ =\ \frac{\phi_1^{2\al}}{2^{\al-\beta}} -
    \bigg(\frac{\al}{2^{\al-\beta}\cdot 12} + \frac{\beta}{2^{\al-\beta+2}}
    \bigg)\phi_1^{2\al + 2} + O(\phi_1^{2\al+4}).
  \end{split}
\end{align}
The Taylor expansion of the terms in the second factor defining $H(\phi_1,\dots,\phi_N)$
in~\eqref{helpterm} is
\begin{equation}\label{Taylor2A}
  \begin{split}
    \prod_{k=2}^N (\cos\phi_1 - \cos\phi_k)^2 & = \prod_{k=2}^N \left[ \bigg(1 - \frac{\phi_1^2}{2!} + O(\phi_1^4)\bigg) - \cos\phi_k \right]^2\\
    & = \prod_{k=2}^N \left[ (1-\cos\phi_k)^2 - \phi_1^2(1-\cos\phi_k)+ O(\phi_1^4) \right]\\
    & = \prod_{k=2}^N (1-\cos\phi_k)^2 - \phi_1^2 \left[\sum_{j=2}^N \prod_{k =
      2}^N \frac{(1 -\cos\phi_k)^2}{1-\cos\phi_j} \right] + O(\phi_1^4).
  \end{split}
\end{equation}
Using~\eqref{Taylor1} and \eqref{Taylor2A} in \eqref{helpterm} gives
\begin{equation}
  \begin{split}
    H(\phi_1,\dots,\phi_N) &\ =\ \left[\frac{\phi_1^{2\al}}{2^{\al-\beta}} - \bigg(\frac{\al}{2^{\al-\beta}\cdot 12} + \frac{\beta}{2^{\al-\beta+2}} \bigg)\phi_1^{2\al + 2} + O(\phi_1^{2\al+4})\right] \\
    &\quad \times \left[\prod_{k=2}^N (1-\cos\phi_k)^2 - \phi_1^2
    \left[\sum_{j=2}^N \prod_{k = 2}^N \frac{(1 -\cos\phi_k)^2}{1-\cos\phi_j}
    \right] + O(\phi_1^4)\right].
  \end{split}
\end{equation}
Multiplying out and collecting terms by powers of $\phi_1$ gives
\begin{equation}\label{Hbeforeintegrating}
  \begin{split}
    H(\phi_1,\dots,\phi_N) &\ =\ \frac{\phi_1^{2\al}}{2^{\al-\beta}} \prod_{k=2}^N (1-\cos\phi_k)^2
    - \frac{\phi_1^{2\al + 2}}{2^{\al-\beta}} \left[\sum_{j=2}^N \prod_{k = 2}^N \frac{(1 -\cos\phi_k)^2}{1-\cos\phi_j} \right] \\
    &\ \ \ \ - \bigg(\frac{\al}{2^{\al-\beta} \cdot 12} +
    \frac{\beta}{2^{\al-\beta+2}} \bigg) \phi_1^{2\al+2}\prod_{k=2}^N
    (1-\cos\phi_k)^2 + O(\phi_1^{2\al + 4}).
  \end{split}
\end{equation}
Integration of the last expression~(\ref{Hbeforeintegrating}) gives
\begin{align} \label{Hintegrated}
  \begin{split}
    \int_0^\phi H(\phi_1,\dots,\phi_N) d \phi_1  & = \prod_{k=2}^N (1-\cos\phi_k)^2
    \frac{\phi^{2\al+1}}{(2\al+1)2^{\al-\beta}} \\
    &\quad - \left[\sum_{j=2}^N \prod_{k = 2}^N \frac{(1 -\cos\phi_k)^2}{1-\cos\phi_j} \right]  \frac{\phi^{2\al+3}}{(2\al+3)2^{\al - \beta}}\\
    &\quad - \prod_{k=2}^N (1-\cos\phi_k)^2  \bigg( \frac{\al}{2^{\al -
        \beta} \cdot 12} + \frac{\beta}{2^{\al - \beta + 2}}\bigg)
    \frac{\phi^{2\al+3}}{2\al+3} + O(\phi^{2\al + 5}).
  \end{split}
\end{align}
Hence, to evaluate $\tilde{I}(1) = \int_0^\pi \cdots \int_0^\pi \int_0^\phi K(\phi_2,\dots,\phi_N) H(\phi_1,\dots,\phi_N) d \phi_1 \cdots d \phi_N$, we have to compute
\begin{align} \label{firstintegralofH}
& \int_0^\pi \cdots \int_0^\pi K(\phi_2,\dots,\phi_N) \prod_{k = 2}^N (1 - \cos\phi_k)^2 d \phi_2 \cdots d \phi_N \quad\text{and} \\
& \int_0^\pi \cdots \int_0^\pi K(\phi_2,\dots,\phi_N) \left[\sum_{j=2}^N \prod_{k = 2}^N \frac{(1 -\cos\phi_k)^2}{1-\cos\phi_j} \right] d \phi_2 \cdots d \phi_N. \label{symmetricpartofH}
\end{align}

Observe that the integrand of \reff{symmetricpartofH} is symmetric in its variables $\phi_2, \ldots, \phi_N$. Therefore we have
\begin{multline} \label{symmetricpartofHA}
\int_0^\pi \cdots \int_0^\pi K(\phi_2,\dots,\phi_N)   \left[\sum_{j=2}^N \prod_{k = 2}^N \frac{(1 -\cos\phi_k)^2}{1-\cos\phi_j}\right]
d \phi_2 \cdots d \phi_N \\
= (N-1) \int_0^\pi \cdots \int_0^\pi K(\phi_2,\dots,\phi_N)   (1 -\cos\phi_2) \prod_{k=3}^N (1-\cos\phi_k)^2 d \phi_2 \cdots d \phi_N.
\end{multline}
Evaluating the integral \reff{firstintegralofH} yields
\begin{multline}
   \int_0^\pi \cdots \int_0^\pi K(\phi_2,\dots,\phi_N) \prod_{k=2}^N (1-\cos\phi_k)^2 d \phi_2 \cdots d \phi_N \\
    = \int_0^\pi \cdots \int_0^\pi \prod_{j = 2}^N (1 -\cos\phi_j)^{\al+2} (1 +\cos\phi_j)^\beta
    \prod_{2 \leq j < k \leq N} (\cos\phi_j - \cos\phi_k)^2 d
    \phi_2 \cdots d \phi_N,
\end{multline}
and using Selberg's integral (\ref{MySelberg}) with $\calN = N-1, r = \al + 2,$ and $s = \beta$ gives
\begin{multline}\label{firstintegral}
\int_0^\pi \cdots \int_0^\pi K(\phi_2,\dots,\phi_N)  \prod_{k=2}^N (1-\cos\phi_k)^2 d \phi_2 \cdots d \phi_N \\
= 2^{(N-1)(N + \al + \beta)}
\prod_{j =0}^{N - 2}\frac{\Gamma(2+j) \Gamma(\beta + 1/2 + j) \Gamma(\al + 5/2 +j) }{\Gamma(\al + \beta + N + 1 + j)}.
\end{multline}
Normalizing the last expression (\ref{firstintegral}) with ${\CN}$
from~(\ref{constantinverse}) we obtain
\begin{multline} \label{firstintegralendresult} {\CN}
  \int_0^\pi \cdots \int_0^\pi K(\phi_2,\dots,\phi_N)   \prod_{k=2}^N (1-\cos\phi_k)^2 d \phi_2
  \cdots d \phi_N
  \\
  = \frac{1}{2^{\al + \beta}}   \frac{\Gamma(\al + N + 1/2) \Gamma(\al +
    \beta + N)}{\Gamma(\al + 1/2) \Gamma(\al + 3/2) \Gamma(N + 1) \Gamma(\beta +
    N-1/2)}.
\end{multline}

We now evaluate the integral in \reff{symmetricpartofHA}. For this we
note that
\begin{equation}
(1 -\cos\phi_2) \prod_{k=3}^N (1-\cos\phi_k)^2\ =\ \prod_{k = 2}^N (1 -\cos\phi_k)
 \prod_{k = 3}^N (1- \cos\phi_k).
\end{equation}

So
{\allowdisplaybreaks
\begin{equation}
  \begin{split}
    \int_0^\pi &\cdots \int_0^\pi K(\phi_2,\dots,\phi_N) (1 -\cos\phi_2) \prod_{k=3}^N
    (1-\cos\phi_k)^2\, d \phi_2 \cdots d \phi_N
    \\
    & = \int_0^\pi \cdots \int_0^\pi K(\phi_2,\dots,\phi_N)  \prod_{k = 2}^N (1 -\cos\phi_k)
    \prod_{k = 3}^N (1- \cos\phi_k) d \phi_2 \cdots d \phi_N
    \\
    & = \int_0^\pi \cdots \int_0^\pi \prod_{j = 2}^N (1 -\cos\phi_j)^\alpha (1 + \cos\phi_j)^\beta \prod_{2 \leq j < k \leq N} (\cos\phi_j - \cos\phi_k)^2 \\
    &\quad \times \prod_{k = 2}^N (1 -\cos\phi_k)\prod_{k = 3}^M (1- \cos\phi_k) d \phi_2 \cdots d \phi_N\\
    & = \int_0^\pi \cdots \int_0^\pi \prod_{k=3}^N (1-\cos\phi_k)
    \prod_{j = 2}^N (1 -\cos\phi_j)^{\al + 1}
    (1 +\cos\phi_j)^\beta \\
    &\quad \times \prod_{2 \leq j < k \leq N} (\cos\phi_j - \cos\phi_k)^2 d
    \phi_2 \cdots d \phi_N.
  \end{split}
\end{equation}
} With $R = N - 2, \calN = N - 1, r = \al + 1$, and $s = \beta$ in Aomoto's
integral~(\ref{MyAomoto}) this evaluates to
\begin{equation}\label{secondintegral}
  \begin{split}
    \int_0^\pi \cdots& \int_0^\pi K(\phi_2,\dots,\phi_N) (1 -\cos\phi_2) \prod_{k=3}^N (1-\cos\phi_k)^2 d \phi_2 \cdots d \phi_N \\
    & = 2^{(N-2) + (N-1)(N + \al + \beta - 1)}
    \prod_{j = 1}^{N - 2} \frac{\al + N + 1/2 -j}{\al + \beta + 2N - 1 - j}\\
    &\quad \times
    \prod_{j = 0}^{N - 2}\frac{\Gamma(2 + j) \Gamma(\al + 3/2 + j) \Gamma(\beta + 1/2 + j)}{\Gamma(\al+ \beta + N +j)}\\
    & = 2^{(N-2) + (N-1)(N + \al + \beta - 1)} \frac{\Gamma(\al + N +
      1/2)}{\Gamma(\al + 5/2)} \frac{\Gamma(\al + \beta + N + 1)}{\Gamma(\al +
      \beta + 2N -1)}
    \frac{\Gamma(\al + N -1/2)}{\Gamma(\al + 1/2)} \\
    &\quad \times \prod_{j = 0}^{N - 2}\frac{\Gamma(2+j)\Gamma(\al + 3/2+
      j)\Gamma(\beta + 1/2 + j)}{\Gamma(\al + \beta + N +j)}.
  \end{split}
\end{equation}
Normalizing the last expression (\ref{secondintegral}) with ${\CN}$ from (\ref{constantinverse})
we obtain
\begin{equation}\label{secondintegralendresult}
  \begin{split}
    {\CN} & \int_0^\pi \cdots \int_0^\pi K(\phi_2,\dots,\phi_N)  (1
    -\cos\phi_2) \prod_{k=3}^N (1-\cos\phi_k)^2 d\phi_1 \cdots d \phi_N
    \\
    & = 2^{-\al - \beta - 1} \frac{\Gamma(\al + N + 1/2) \Gamma(\al +
      \beta + N + 1)}{\Gamma(N + 1) \Gamma(\beta + N-1/2) \Gamma(\al + 1/2)
      \Gamma(\al + 5/2)}.
  \end{split}
\end{equation}

Putting (\ref{firstintegralendresult}) and (\ref{secondintegralendresult}) into (\ref{Ionedefinition}) we obtain
\begin{equation}
  \begin{split}
    I(1) &\ =\ \widetilde{H}_1 \frac{\phi^{2\al+1}}{(2\al+1)2^{\al - \beta}} - (N-1) \widetilde{H}_2 \frac{\phi^{2\al+3}}{(2\al +3)2^{\al - \beta}} \\
    &\quad\ \ -\ \bigg(\frac{\al}{2^{\al - \beta}\cdot 12} + \frac{\beta}{2^{\al -
        \beta + 2}} \bigg) \widetilde{H}_1 \frac{\phi^{2\al +3}}{2\al+3} +
    O(\phi^{2\al+5})
  \end{split}
\end{equation}
where
\begin{equation}
\widetilde{H}_1\ :=\ \frac{1}{2^{\al + \beta}}
\frac{\Gamma(\al + N + 1/2) \Gamma(\al + \beta + N)}{\Gamma(\al + 1/2) \Gamma(\al + 3/2) \Gamma(N + 1) \Gamma(\beta + N - 1/2)}
\end{equation}
and
\begin{equation}
\widetilde{H}_2\ :=\ \frac{1}{2^{\al + \beta + 1}}
\frac{\Gamma(\al + N + 1/2) \Gamma(\al + \beta + N + 1)}{\Gamma(N + 1) \Gamma(\beta + N - 1/2) \Gamma(\al + 1/2) \Gamma(\al + 5/2)}.
\end{equation}

Finally, we rewrite the result slightly and get
\begin{equation}
\begin{split}
I(1) &\ =\ H_1 \frac{\phi^{2\al+1}}{2\al+1} - (N-1) H_2 \frac{\phi^{2\al+3}}{2\al +3}  - \bigg(\frac{\al}{12} + \frac{\beta}{4} \bigg) H_1 \frac{\phi^{2\al +3}}{2\al+3} + O(\phi^{2\al+5}) \\
&\ =\ H_1 \frac{\phi^{2\al+1}}{2\al+1} - \left[(N-1) H_2 + \bigg(\frac{\al}{12} + \frac{\beta}{4} \bigg) H_1 \right]
\frac{\phi^{2\al+3}}{2\al+3} + O(\phi^{2\al+5})
\end{split}
\end{equation}
where
\begin{equation}
H_1\ :=\ \frac{\Gamma(\al + N + 1/2) \Gamma(\al + \beta + N)}{2^{2\al}\Gamma(\al + 1/2) \Gamma(\al + 3/2) \Gamma(N + 1) \Gamma(\beta + N - 1/2)}
\end{equation}
and
\begin{equation}
H_2\ :=\   \frac{\Gamma(\al + N + 1/2) \Gamma(\al + \beta + N + 1)}{2^{2\al+1}\Gamma(N + 1) \Gamma(\beta + N - 1/2) \Gamma(\al + 1/2) \Gamma(\al + 5/2)}.
\end{equation}
This completes the proof of Lemma \ref{lemmaIone}.

\end{proof}

\begin{proof}[Proof of Lemma \ref{lemmaIH}]
The definition of $I(n)$ is
\begin{equation}
  \begin{split}
    I(n)  & \ =\  {\CN} \underbrace{\int_0^\pi \cdots \int_0^\pi}_{N - n \text{~times}} \underbrace{\int_0^\phi \cdots \int_0^\phi}_{n \text{~times}} \prod_{j = 1}^N (1 -\cos\phi_j)^\alpha (1 -\cos\phi_j)^\beta \\
    &\quad \ \ \times
  \ \prod_{1 \leq j < k \leq N} (\cos\phi_j - \cos\phi_k)^2 d
    \phi_1 \cdots d \phi_N.
  \end{split}
\end{equation}
Here we are only interested in the size of $I(n)$ in terms of $n$, so we can disregard the
normalization constant ${\CN}$. For $n \ge 2$ we consider
\begin{equation*}
\tilde{I}(n)\ :=\ {\CN}^{-1} I(n).
\end{equation*}
Then we have
{\allowdisplaybreaks
\begin{align} \label{tildeIh}
  \begin{split}
    \tilde{I}(n) & = \int_0^\pi d \phi_{n+1} \cdots \int_0^\pi d \phi_N \prod_{j = n+1}^N (1 -\cos\phi_j)^\alpha (1 + \cos\phi_j)^\beta \\
    &\quad \times \prod_{n+1 \leq j < k \leq N} (\cos\phi_j - \cos\phi_k)^2\\
    &\quad \times \int_0^\phi \bigg\{(1 - \cos\phi_n)^\alpha (1 + \cos\phi_n)^\beta \prod_{j = n + 1}^N (\cos\phi_n - \cos\phi_j)^2 \\
    &\quad \times \int_0^\phi (1 - \cos\phi_{n-1})^\alpha (1 + \cos\phi_{n-1})^\beta \prod_{j = n + 1}^N (\cos\phi_{n-1} - \cos\phi_j)^2\\
    &\quad \times (\cos\phi_{n-1} - \cos\phi_n)^2 d\phi_{n - 1} \\
    & \quad\  \vdots\\
    &\quad \times \int_0^\phi (1 - \cos\phi_1)^\alpha (1 + \cos\phi_1)^\beta \prod_{j = n + 1}^N (\cos\phi_1 - \cos\phi_j)^2\\
    &\quad \times (\cos\phi_1 - \cos\phi_2)^2 \times (\cos\phi_1 -
    \cos\phi_3)^2 \times \cdots \times (\cos\phi_1 - \cos\phi_n)^2
    d\phi_1\bigg\} d \phi_n.
  \end{split}
\end{align}
}

Now for $j,k \leq n$, $j \ne k$ and $\phi_j, \phi_k \in [0,\phi]$ for small
$\phi > 0$ we have
\begin{equation} \label{cosinusinequality}
(\cos\phi_j - \cos\phi_k)^2 \leq (1 -\cos \phi)^2.
\end{equation}
There are $(n-1) + \cdots + 1 = n(n-1)/2$ terms of the form $(\cos\phi_j - \cos\phi_k)^2$
(with $j =1,\ldots,n-1$, and $k = j + 1, \ldots, n$) occurring in
(\ref{tildeIh}), each of which we bound using~(\ref{cosinusinequality}).

From equation (\ref{Hintegrated}) in the proof of Lemma \ref{lemmaIone} we
derive for $k = 1, \ldots, n$ that
{\allowdisplaybreaks
\begin{equation} \label{integralidentity}
  \begin{split}
    \int_0^\phi &(1 - \cos\phi_k)^\alpha (1 + \cos\phi_k)^\beta \prod_{j = n + 1}^N (\cos\phi_k - \cos\phi_j)^2 d\phi_k \\
    & = \prod_{j = n+1}^N ( 1 - \cos\phi_j)^2  \frac{\phi^{2\al +
        1}}{2^{\al - \beta}(2\al+1)} + O(\phi^{2\al + 3}).
  \end{split}
\end{equation}
}
Using (\ref{cosinusinequality}) and (\ref{integralidentity}) we obtain
{\allowdisplaybreaks
\begin{equation}
  \begin{split}
    \tilde{I}(n) & \leq \int_0^\pi d \phi_{n+1} \cdots \int_0^\pi d \phi_N \prod_{j = n+1}^N (1 -\cos\phi_j)^\alpha (1 + \cos\phi_j)^\beta \\
    &\quad \times \prod_{n+1 \leq j < k \leq N} (\cos\phi_j - \cos\phi_k)^2 (1 -\cos \phi)^{n(n-1)}\\
    &\quad \times \left[\prod_{j = n+1}^N ( 1 - \cos\phi_j)^2 \times \frac{\phi^{2\al + 1}}{2^{\al - \beta}(2\al+1)} + O(\phi^{2\al + 3})   \right]^n\\
    & = \int_0^\pi d \phi_{n+1} \cdots \int_0^\pi d \phi_N \prod_{j = n+1}^N (1 -\cos\phi_j)^\alpha (1 + \cos\phi_j)^\beta\\
    &\quad \times \prod_{n+1 \leq j < k \leq N} (\cos\phi_j - \cos\phi_k)^2 (1 -\cos \phi)^{n(n-1)}\\ &\quad \times \left[\prod_{j = n+1}^N ( 1 - \cos\phi_j)^{2n} \times \frac{\phi^{(2\al + 1)n}}{[2^{\al - \beta}(2\al+1)]^n} + O(\phi^{n(2\al+1) + 2})\right]\\
    & = \int_0^\pi d \phi_{n+1} \cdots \int_0^\pi d \phi_N \prod_{j = n+1}^N (1
    -\cos\phi_j)^{\al+2n}
    (1 + \cos\phi_j)^\beta \\
    &\quad \times \prod_{n+1 \leq j < k \leq N} (\cos\phi_j - \cos\phi_k)^2 (1 -\cos \phi)^{n(n-1)}\\
    &\quad \times \left[\frac{\phi^{(2\al + 1)n}}{[2^{\al - \beta}(2\al+1)]^n}
    + O(\phi^{n(2\al+1) + 2})\right].
  \end{split}
\end{equation}
}
Hence for some constants $c_1$ and $c_2$
{\allowdisplaybreaks
\begin{equation}
  \begin{split}
    \tilde{I}(n) &\ \leq\  c_1   (1 -\cos \phi)^{n(n-1)} \left[\phi^{(2\al + 1)n} + O(\phi^{n(2\al+1) + 2})\right]\\
    &\ =\ c_2 \left[\phi^2 + O(\phi^4)\right]^{n(n-1)}\left[\phi^{(2\al + 1)n} + O(\phi^{n(2\al+1) + 2})\right]\\
    &\ =\ c_2 \left[\phi^{2n(n-1)} + O(\phi^{2n(n-1) + 2}) \right]\left[\phi^{(2\al + 1)n} + O(\phi^{n(2\al+1) + 2})\right]\\
    &\ =\ c_2 \phi^{(2\al + 1)n + 2n(n - 1)} + O(\phi^{(2\al+1)n + 2n(n-1) + 2})\\
    &\ =\ c_2 \phi^{2\al n + 2n^2 - n} + O(\phi^{(2\al+1)n + 2n(n-1) + 2}).
  \end{split}
\end{equation}
}
Hence $\tilde{I}(n) \ll \phi^{2\al n + 2n^2 - n}$, which implies $I(n) \ll \phi^{2\al n +
  2n^2 - n}$ as required to finish the proof.
\end{proof}

\begin{remark} \rm
It is natural to ask whether we can determine further terms of the Taylor expansion of $\ENf$
than provided in Proposition \ref{InitialConditionProposition}. By Lemma \ref{lemmaEGN} this requires us
to determine $I(n)$ with $n\geq 2$. However, for $I(n)$ with $n \geq 2$ we encounter multiple integrals
which cannot be dealt with using Selberg's or Aomoto's integral. To our knowledge the required
generalization of Selberg's integral does not exist.
\end{remark}


\section{First method: Initial conditions for the Painlev\'e equation}
\label{Matlabcode}

Here we use the results from the previous section to actually state the initial conditions \reff{InitialConditionsMatLab} for our
system of differential
equations \reff{Painlevesystem} which gives the distribution $\nuN(\phi)$ of the first eigenphase \reff{distributionoffirsteigenphases}
of random matrices in the Jacobi ensemble $J_N=J_N^{(\alpha,\beta)}$. With these initial conditions we then implement a MATLAB algorithm
to compute a numerical approximation for $\nuN(\phi)$. The full MATLAB code is provided in the appendix. For its implementation we follow
some of the ideas in Edelman and Persson~\cite{Edel06}.

As outlined at the end of Section \ref{Method1-PainleveEq} we are left to provide $\tEN(t_0), h(t_0),$ and $h'(t_0)$, namely
the  initial conditions \reff{InitialConditionsMatLab},
for some $t_0 = 1 -\varepsilon$ with small $\varepsilon > 0$.
Recall that, according to Proposition~\ref{InitialConditionProposition}, we have
\begin{equation} \label{EGNforMatlab}
  \EN(\phi) = 1 - N \bigg(H_1 \frac{\phi^{2\al+1}}{2\al+1} -
    \left[(N-1) H_2 + \bigg(\frac{\al}{12} + \frac{\beta}{4} \bigg) H_1 \right]
    \frac{\phi^{2\al +3}}{2\al+3}\bigg)  + O(\phi^{2\al+4}),
\end{equation}
with
\begin{equation} \label{HoneforMatlab}
H_1 = \frac{\Gamma(\al + N + 1/2) \Gamma(\al + \beta + N)}{2^{2\al}\Gamma(\al + 1/2) \Gamma(\al + 3/2) \Gamma(N + 1) \Gamma(\beta + N - 1/2)}
\end{equation}
and
\begin{equation} \label{HtwoforMatlab}
H_2 = \frac{\Gamma(\al + N + 1/2) \Gamma(\al + \beta + N + 1)}{2^{2\al+1}\Gamma(N + 1) \Gamma(\beta + N - 1/2) \Gamma(\al + 1/2) \Gamma(\al + 5/2)}.
\end{equation}

Via the substitution $\phi = \cos^{-1}(2t-1)$, equation~\reff{EGNforMatlab}
provides a good approximation for $\tEN(t_0)$.
In view of the definition~(\ref{Okamoto}) of the auxiliary Hamiltonian $h$, we need to
differentiate $\tEN(t)$ twice with respect to $t$ in order to obtain the initial
conditions $h(t_0)$, $h'(t_0)$ (the second and third components of
\reff{InitialConditionsMatLab}).

Since $\phi=\cos^{-1}(2t-1)$ implies $d \phi / d t = -1/\sqrt{t(1-t)}$ and
$\tEN(t)=\EN(\phi)$, the chain rule and equation \reff{Okamoto} give
\begin{align}
 \nonumber
  h(t) & = t \cdot e_2'[\bb] - \tfrac{1}{2}e_2[\bb] + t(t-1)\frac{\tfrac{d}{dt}\tEN(t)}{\tEN(t)} \\ \label{conditiontwoMatlab}
  & = t \cdot e_2'[\bb] - \tfrac{1}{2}e_2[\bb] +
  \sqrt{t(1-t)} \frac{{\EN}'(\cos^{-1}(2t-1))}{\EN(\cos^{-1}(2t-1))}
\end{align}
and
{\allowdisplaybreaks
\begin{align}
  \nonumber
    h'(t) & = e_2'[\bb] + \frac{1-2t}{2\sqrt{t(1-t)}}
    \frac{{\EN}'(\cos^{-1}(2t-1))}{\EN(\cos^{-1}(2t-1))}\\ \nonumber
   &\quad + \frac{\sqrt{t(1-t)}}{\EN(\cos^{-1}(2t-1))}
 \left( \frac{-{\EN}''(\cos^{-1}(2t-1))}{\sqrt{t(1-t)}}  \right) \\ \label{conditionthreeMatlab}
    &\quad+ \sqrt{t(1-t)}{\EN}'(\cos^{-1}(2t-1) )
    \left(\frac{{\EN}'(\cos^{-1}(2t-1) )}{\sqrt{t(1-t)} \EN(\cos^{-1}(2t-1)
        )^2}\right) \\ \nonumber
  & = e_2'[\bb] + \frac{1-2t}{2\sqrt{t(1-t)}}
    \frac{{\EN}'(\cos^{-1}(2t-1))}{\EN(\cos^{-1}(2t-1))}\\ \nonumber
   &\quad - \frac{{\EN}''(\cos^{-1}(2t-1))}{\EN(\cos^{-1}(2t-1))}
    + \frac{{\EN}'(\cos^{-1}(2t-1) )^2}{\EN(\cos^{-1}(2t-1)
        )^2}.
\end{align}
}
With \reff{EGNforMatlab}, \reff{conditiontwoMatlab} and
\reff{conditionthreeMatlab} we have the initial conditions \reff{InitialConditionsMatLab} for
\reff{Painlevesystem}. As mentioned earlier, with these initial conditions we can now implement
a MATLAB algorithm to compute the distribution of the first eigenvalue of random matrices in the Jacobi
ensemble. We provide the full MATLAB code in the appendix. Also, it can be obtained from the authors or
from their web pages, such as
\begin{center}\url{http://www.maths.bris.ac.uk/~mancs/publications.html}.\end{center}


\section{Second method: Symbolic solution using power series}
\label{sec:power-series}
In this section we describe an algorithm to compute the power series expansion
of the Painlev\'e function $h(t)$ at $t=0$, leading to the numerical computation
of $\EN(\phi)$ for $\phi$ close to~$\pi$. It is rather unfortunate (for our
intended application) that $t=1$ is a branch-point singularity of~$h(t)$, and as
a consequence a power-series expansion of~$h$ about $t=1$ is a Puisseaux series
(i.~e., a series in fractional powers of~$t$), at least if the parameters $a,b$
(and, eventually, $N$) are rational numbers; for arbitrary values of the
parameters the situation would be even more complicated. Therefore, we content
ourselves for the time being with finding the power series expansion about
$t=0$.

The idea of the algorithm is very simple: The coefficients $h_0$, $h_1$ of the
expansion $h(t)=h_0+h_1t + h_2t^2+\dots$ are given in
equation~\eqref{t=0-BoundCond}. These are used to bootstrap a recursive search
for the higher coefficients $h_2$, $h_3$, \dots, regarding each unknown $h_k$ as
implicitly defined by the earlier coefficients $h_0$, $h_1$, $h_2$, \dots,
$h_{k-1}$ through the Painlev\'e equation~\eqref{Painleve6}. Given the
complicated nonlinear nature of the Painlev\'e equation it is not immediately
obvious that this approach will work in practice, but fortunately it does, and
each successive coefficient $h_k$ is expressible as a rational function of the
previous ones, hence ultimately as a rational function of the
parameters~$a,b,N$. Once many terms are computed, the power series for
$h(t)$ can be used to evaluate~$\tEN(t)$ by solving for the latter in
equation~\eqref{Okamoto}; then we use the change of variables $t\to\phi$ and
differentiation to compute the density $\nuN(\phi)$ of the distribution of the
first eigenvalue, at least for $\phi$ relatively close to~$\pi$.

We seek to find the coefficients~$h_k$ as exact rational numbers; for this
reason, we will work with exact (truncated) power series with rational
coefficients. This approach has the enormous advantage that the complicated
operations needed to evaluate both sides of the Painlev\'e
equation~\eqref{Painleve6} introduce no numerical errors at all in the
evaluation of successive higher coefficients. The price paid is a more expensive
calculation compared to one done using exclusively floating-point
arithmetic. Remarks on the choice of the number of needed terms to reach
adequate numerical precision follow below in Section~\ref{sec:numer-painl-solv}.

In the appendix we list the code for an implementation of this algorithm in
SAGE~\cite{SAGE} (\emph{Software for Algebra and Geometry Exploration}), a free
and open-source computer algebra system, although Maxima~\cite{MAXIMA} plays an
important role behind the scenes. The Python syntax underlying SAGE is clean and
the code listing should prove useful both SAGE newcomers and those interested
in porting it to other computer algebra systems.  Line numbers from
the SAGE code listing will be referenced below as needed.

We take the parameters $a$, $b$ and~$N$ to be (fixed) rational numbers, and
regard the function
\begin{equation}
  \label{eq:PowSerTau}
  h(t) = h_0 + h_1t + h_2t^2 + \cdots
\end{equation}
as having coefficients which are rational numbers. In particular, from
equation~\eqref{t=0-BoundCond} we have
\begin{align}
  \label{eq:h0}
  h_0 &= -\frac{1}{2}e_2[\bb] -N(b + N) \\
\label{eq:h1}
h_1 &= e_2'[\bb] + \frac{N(N+b)(2N + a + b)}{2N + b}
\end{align}
with $e_2[\bb]$, $e_2'[\bb]$ given in terms of $a,b,N$
by~\eqref{definitionofbs}, \eqref{e2} and~\eqref{e2p}.

In the SAGE code listing, the values of the basic parameters $a,b,N$ are
hard-coded, as well as the maximum degree \texttt{DEGREE} of precision of all
(truncated) power series (lines 1--4). Note that the algorithm's implementation
depends crucially on inputting rational values for the parameters $a,b,N$. With
a limitation explained below, the algorithm can handle integral as well as
rational values of the parameter $\mathtt{n}=N$.

When, at any given point, the coefficients $h_0,\dots,h_{k-1}$ are known, but
$h_k$ is still unknown, we wish to regard the latter as an indeterminate, say
$h_k=X$. Let
\begin{equation}
  \label{eq:hTrunc}
  h(t) = h_0 + h_1t + \dots + h_{k-1}t^{k-1} + Xt^k.
\end{equation}
Then $h$ lies in the ring $\mathbf{Q}[X,t]$ of polynomials in $X,t$ with
rational coefficients. Let us denote by $LHS(h)$ and $RHS(h)$ the polynomials
obtained by substitution of~\eqref{eq:hTrunc} in the Painlev\'e
equation~\eqref{Painleve6}, and let $PEZ(h)=RHS(h)-LHS(h)$
(``\emph{P}ainlev\'e-\emph{E}qual-to-\emph{Z}ero''---lines 33--37). The natural
hope is that if $h_0, h_1,\dots,h_{k-1}$ are chosen correctly, then $PEZ(h) =
p_k(X)t^k + O(t^{k+1})$ for a non-constant polynomial $p_k\in \mathbf{Q}[x]$ and
some (unspecified) polynomial $O(t^{k+1})$ divisible by $t^{k+1}$. Then $h_k$
should be chosen to be a root of $p_k(X)$.

Performing exact polynomial arithmetic in $\mathbf{Q}[X,t]$ to
compute $PEZ(h)$ is quite expensive, especially since we only need
to determine the coefficient $p(X)$ of the lowest power of~$t$.
For computational purposes, however, it is enough to regard $h$ as
a finite truncation of an infinite power series in~$t$,
systematically neglecting any higher-order terms not needed for
the immediate purpose at hand---namely the determination of
$p_k(X)$. Fortunately, SAGE can do algebra in power series rings
(with help from Maxima).

Henceforth we work in the ring $\mathbf{S}=\mathbf{Q}[X][[t]]$ of power series
in~$t$ whose coefficients are in $\mathbf{Q}[X]$ (polynomials in $X$ with
rational coefficients) as done in lines 13--14 of the SAGE code. In order to
determine $p_k(X)$ it is enough to know $h$ up to $O(t^{k+2})$ terms. (It is not
quite enough to work modulo $t^{k+1}$ because $p_k(X)$ \emph{a priori} depends
also $h_{k+1}$, not just on the currently unknown $X=h_k$; luckily, the solution
found \emph{a posteriori} shows this dependence to be fictitious.)

We thus set
\begin{equation}
  \label{eq:hTrunc2}
  h(t) = h_0 + h_1t + \dots + h_{k-1}t^{k-1} + Xt^k + O(t^{k+2})
\end{equation}
and compute $PEZ(h) = p_k(X)t^k + O(t^{k+1})$ for a \emph{linear}
polynomial~$p_k$ (the only exception is $p_2$, which is quadratic with a trivial
root $X=0$). In the SAGE implementation, we store the coefficients $h_k$ in a
SAGE list \texttt{g} (lines 26 \&~27---it is here that crucial use is made of
the initial conditions~\eqref{eq:h0} and~\eqref{eq:h1}).

Successively (main loop in lines 31--46), for each $k=2,3,\dots$ it suffices to
take $h_k$ as the unique (nontrivial) root of $p_k(X)$, which is a rational
number. (Note that this is the only point in the algorithm at which symbolic
algebra is needed, and that because of the trivial nature of the equation solved
it would be easy to write an implementation dispensing with any use of symbolic
algebra, a task which we presently avoid simply because the resulting code would
be longer and more difficult to read.)  This root is found in lines 38 \&~39,
then appended to the coefficient list in line~40 and $h(t)$ reconstructed using
the newly-found $h_k$ in lines 42--45.

A couple of remarks on the code are in order. SAGE does not seem to understand
that we wish to interpret the variable \texttt{x} of the power series ring
\texttt{F} to be a ``symbolic'' variable with respect to which the equation
\texttt{PEZ[i] == 0} (i.~e., $p_i(X)=0$) is to be solved. We therefore need
explicitly to replace the ring's variable \texttt{x} with a symbolic variable
\texttt{X} (lines 29 \&~38) before finding the root of \texttt{PEZ[i]}, which is
then appended at the end of the coefficient list~\texttt{g} (lines 39 \&~40). We
also found it simpler to reconstruct \texttt{h} from scratch in lines 42--45
using the coefficient list~\texttt{g} than figuring out a way both to
(i)~increase the precision of~\texttt{h}, and (ii)~substitute the newly-found
rational coefficient~\texttt{g[i]} for~\texttt{x}. (The wrapper \texttt{QQ(...)}
around the argument of~\texttt{g} is used to convert (``coerce'' in SAGE lingo)
the root found by Maxima to a \emph{bona fide} SAGE rational number.)

Solving for $\tEN(t)$ in the definition~\eqref{Okamoto} of the auxiliary
Hamiltonian $h(t)$ yields
\begin{equation} \label{EN-from-h}
  \begin{split}
    \tEN(t) &= \exp\left(\int\frac{h(t) - t \cdot e_2'[\bb] +
        \tfrac{1}{2}e_2[\bb]}{t(t-1)}\,dt + c\right) \qquad\text{for a suitable constant $c$} \\
    &= C\exp\left(\int\frac{h_{0}+\tfrac{1}{2}e_2[\bb] + t\cdot(h^{(1)}(t) -
        e_2'[\bb])} {t(t-1)}\,dt\right) \\
    &\qquad\text{where $C=e^c$ and $h^{(1)}(t) = (h(t)-h_0)/t = h_1+h_2t + h_3t^2+\dots$}\\
    &= C\exp\left(\int\frac{-N(N+b)} {t(t-1)}\,dt + \int \frac{h^{(1)}(t) -
        e_2'[\bb]}{t-1}\,dt \right) \\
    &\qquad\text{since $h_0=-\frac{1}{2}e_2[\bb] -N(N+b)$
      from~\eqref{t=0-BoundCond}} \\
    &= C\exp\left(-N(N+b)\int \left( \frac{1} {t-1}-\frac{1}{t}\right)\,dt +
      \int \frac{h^{(1)}(t) - e_2'[\bb]}{t-1}\,dt \right) \\
    &= C\exp\left(N(N+b)\log t
      + \int \frac{h^{(1)}(t) - e_2'[\bb]-N(N+b)}{t-1}\,dt \right) \\
    &= C t^{N(N+b)}\exp\left(\int \frac{h^{(1)}(t) - e_2'[\bb]-N(N+b)}{t-1}\,dt
    \right).
  \end{split}
\end{equation}
Note that the integrand in the last expression above is regular at $t=0$. We define
\begin{equation}\label{eq:F}
  \mathcal{F}(t)
  = \exp\left(\int_0^t \frac{h^{(1)}(\tau) - e_2'[\bb]-N(N+b)}{\tau-1}\,d\tau \right).
\end{equation}
Then $\mathcal{F}(t)$ has a series expansion (in integral powers) about~$0$ with
$\mathcal{F}(0)=1$, and equation~\eqref{EN-from-h} reads
\begin{equation}
  \label{eq:ENF}
  \tEN(t) = Ct^{N(N+b)}\mathcal{F}(t).
\end{equation}
It follows that the leading-order term of the power series expansion of
$\tEN(t)$ about $t=0$ is $Ct^{N(N+b)}$, explaining the nomenclature
\texttt{leadexp} (``leading exponent'') for the auxiliary SAGE variable defined
in line~25.

The value of the constant $C$ in~\eqref{eq:ENF} can be read off from
equation~(3.5) in~\cite{ForrWitte04} as the quotient of the normalization
constants $\tCN=1/\mathcal{S}_N(a+1,b+1;1)$ for the Jacobi
ensembles $J_N^{{(a,b)}}$ and $\widetilde C_N^{(0,b)}=1/\mathcal{S}_N(1,b+1;1)$ for
$J_N^{(0,b)}$ (recall that we swap the role of $a$ and $b$ relative to
Forrester and Witte; moreover, $\tCN = \widetilde C_N^{(b,a)}$). Explicitly,
\begin{equation}
  \label{eq:C}
  C = \frac{\tCN}{\widetilde C_N^{(0,b)}}
   = \frac{\mathcal{S}_N(1,b+1;1)}{\mathcal{S}_N(a+1,b+1;1)}.
\end{equation}
The SAGE function \texttt{Jac(a,b,n)} in lines 16--22 evaluates the Selberg
integral $\mathcal{S}_N(a+1,b+1;1)$ for \emph{integer} values of~$N=\mathtt{n}$
using formula~\eqref{Selberg}; the value of $C$ is then computed and stored in
\texttt{leadcoef} (line~24).

This particular implementation naturally depends on $N=\mathtt{n}$ being a
positive integer. However, it is possible to evaluate \texttt{Jac(a,b,n)} in
closed form using Barnes'  $G$-function. Unfortunately, the $G$-function is
not yet implemented in SAGE; however, given any (future) implementation thereof,
the following code
\begin{lstlisting}[language=Python,stringstyle=\ttfamily,frame=single]
# *NOT* VALID SAGE CODE UNTIL BARNES' G IS IMPLEMENTED!!!
def Jac(a,b,n):
 return G(n+a+b+2.)/G(2*n+a+b+1.) \
        *G(n+a+1.)/G(a+2.) \
        *G(n+b+1.)/G(b+2.) \
        *G(n+2.)
\end{lstlisting}
would correctly compute \texttt{leadcoef} and the program would be capable of
handling general rational values of~\texttt{n}. Alternatively, the value of
\texttt{leadcoef} can be computed by any other means and manually input into the
SAGE code as a hard constant.

We now rewrite~\eqref{dHfirstcomponent} in the form
\begin{equation}
  \label{eq:tENp}
 \widetilde{E}_N^{(a,b)\prime}(t)\ =\ \left(\frac{h^{(1)}(t) - e_2'[\bb]}{t-1} - \frac{N(N+b)}{t(t-1)}\right) \tEN(t),
\end{equation}
which, together with~\eqref{eq:ENF} allows computing $\tEN(t)$ and its
derivative $\widetilde{E}_N^{(a,b)\prime}(t)$ as done in lines 58--64 of the
code. Finally, the cumulative distribution function
\begin{equation*}
1-\EN(\phi) =  1-\tEN\left(\frac{1+\cos\phi}2\right)
\end{equation*}
and its derivative (cf.~\eqref{distributionoffirsteigenphases}
and~\eqref{eq:tENp})
\begin{equation*}
  \nuN(\phi)   =  -\frac{d}{d\phi} \EN(\phi)
  = \frac{\sin\phi}2 \widetilde{E}_N^{(a,b)\prime} \left( \frac{1+\cos\phi}{2} \right).
\end{equation*}
can be computed directly, as done in lines 66--72.  (Note that
$\frac12\sin\phi=\sqrt{t(1-t)}$ if $t=\frac{1+\cos\phi}2$.)
The entire SAGE code can be obtained from the authors and is also available for
download at \url{http://www.maths.bris.ac.uk/~mancs/publications.html}.


\section{Comparison of the two methods}
\label{sec:numer-painl-solv} As might be expected, our numerical
implementation in MATLAB of the Painlev\'e solver does not work
equally well in all parameter regimes. In this section we describe
the tests we have carried out on the code and the conclusions
about the parameter regimes where a robust solution can be
obtained. The MATLAB (Runge-Kutta) solver starts from initial
conditions near $\phi=0$ (that is, $t=1$) and numerically extends
the computed solution towards $\phi=\pi$ (or $\theta =
\tfrac{N}{\pi}\phi=N$ in scaled units). The power series, on the
other hand, is an expansion around $\phi=\pi$ (or, equivalently,
$t=0$); it is therefore accurate at the opposite end of the
interval on which we are solving. Hence, if the tail of our
numerical solution matches the initial behaviour of the series
solution, we are confident that the numerical solver has worked
correctly.

There are three parameters to vary in the input to the numerical solver:
$a=\alpha-1/2$, $b=\beta-1/2$ and $N$. There are also three variables we can
adjust in the MATLAB code to try to coax a solution: $t_0$, the starting point
near $t=1$ ($\phi=0$) for the numerical solver; \verb+reltol+ and \verb+abstol+,
which control the accuracy of the numerical solution.

After testing the code for various values of $N$ (integer and
non-integer) from about 1 up to 100, it appears that a good
solution can be found on a standard desktop machine in a few
seconds with $t_0=1-10^{-7}$, \verb+reltol+$=10^{-5}$ and
\verb+abstol+$=10^{-6}$ for any $-0.5\leq a\leq 0$ and $-0.5\leq
b\leq 0.5$ (the range for $b$ that is relevant to the classical
groups). In Figure \ref{goodnumerics} we see examples of code that
runs efficiently and matches the series expansion in the tail of
the distribution.

\begin{figure}[htbp]
\includegraphics[scale=.4]{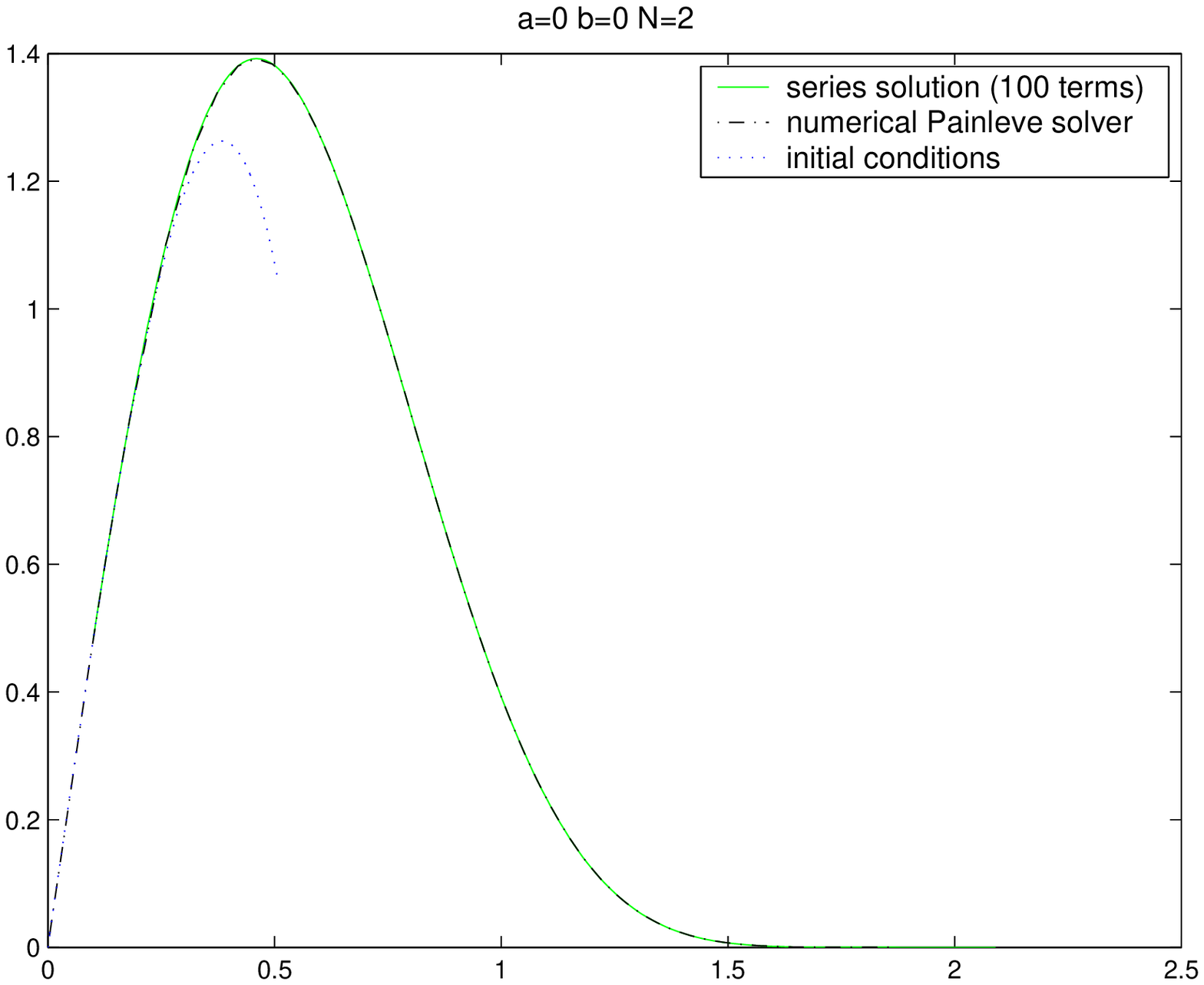} \hspace{0.1 in}
\includegraphics[scale=0.4]
{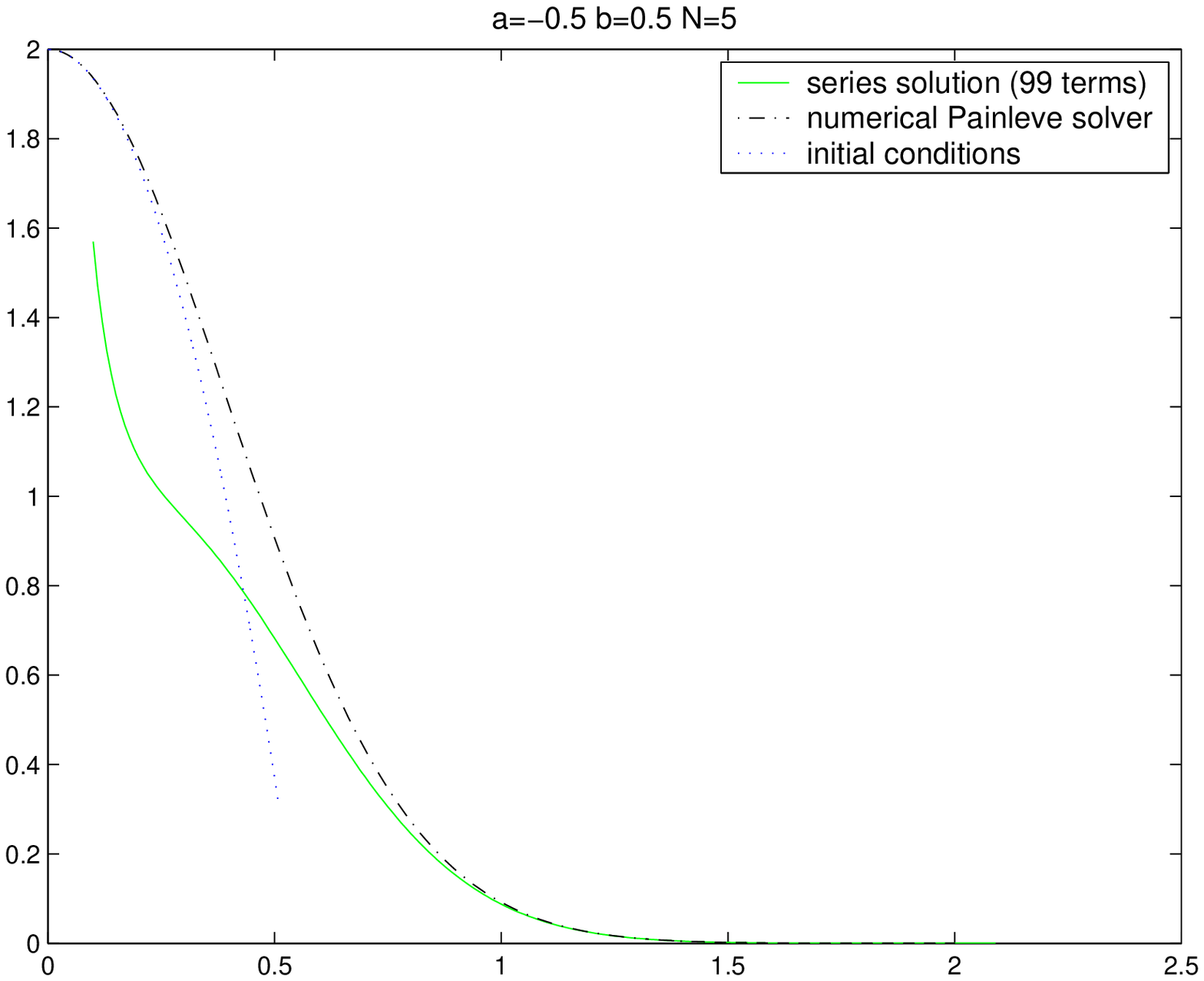}
\caption{ Plot of $\nu_N^{a+1/2,b+1/2}(\tfrac{\pi}{N}\theta)$, the
scaled distribution of the first eigenvalue.  On the left $N=2$,
$a=0$ and $b=0$.  The numerical solver was given the initial
conditions (blue dotted line) and produced the solution shown with
the dot-dashed line.  This is indistinguishable from the series
expansion (green solid line) using 100 terms. On the right $N=5$,
$a=-0.5$ and $b=0.5$. The numerical solver was given the initial
conditions (blue dotted line) and produced the solution shown with
the dot-dashed line. The tail agrees with the series expansion
(green solid line) using 99 terms. In both figures the numerical
solver was run with values of $t_0=1-10^{-7}$, $
\texttt{reltol}=10^{-5}$ and $\texttt{abstol}=10^{-6}$. }
 \label{goodnumerics}
\end{figure}

For values of $a>0$ the MATLAB solver breaks down. Trials with $a=0.001$ still
work, but already $a=0.01$ fails to produce a good solution. Moving $t_0$ away
from 1, for example to $1-10^{-2}$, helps achieve a better curve, but as can be
seen from Figure 2, the initial conditions are not close enough to the true
curve at this point to produce a valid solution. Decreasing \verb+reltol+ and
\verb+abstol+, even by a factor of 1000, does not make a visible difference to
the curve. We note that unfortunately this means that while the MATLAB solver
works very well for the group SO$(2N)$, we cannot use it to produce solutions
for SO$(2N+1)$ and USp$(2N)$.

\begin{figure}[htbp]
\includegraphics[scale=.4]{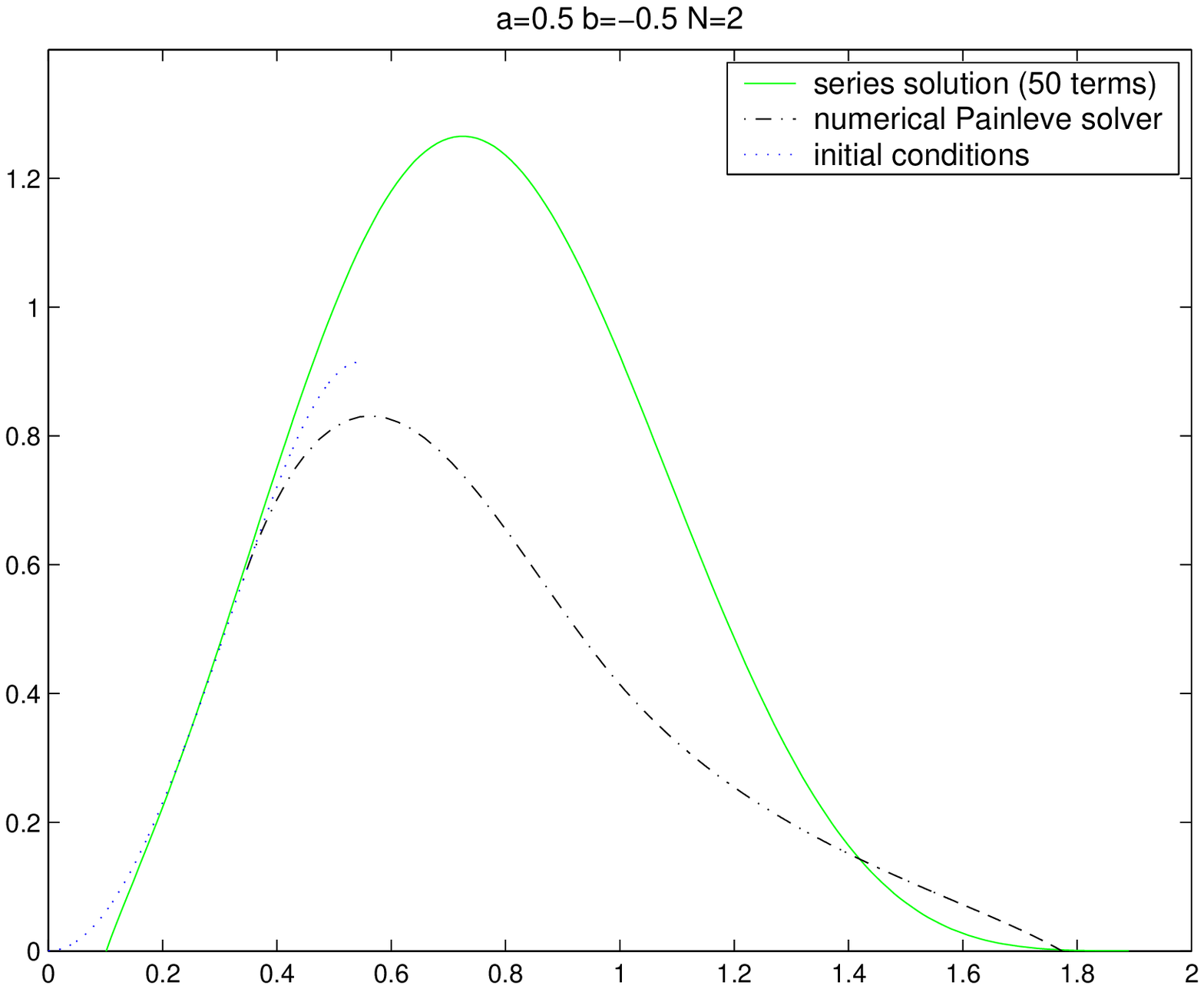} \hspace{0.1 in}
\includegraphics[scale=0.4]
{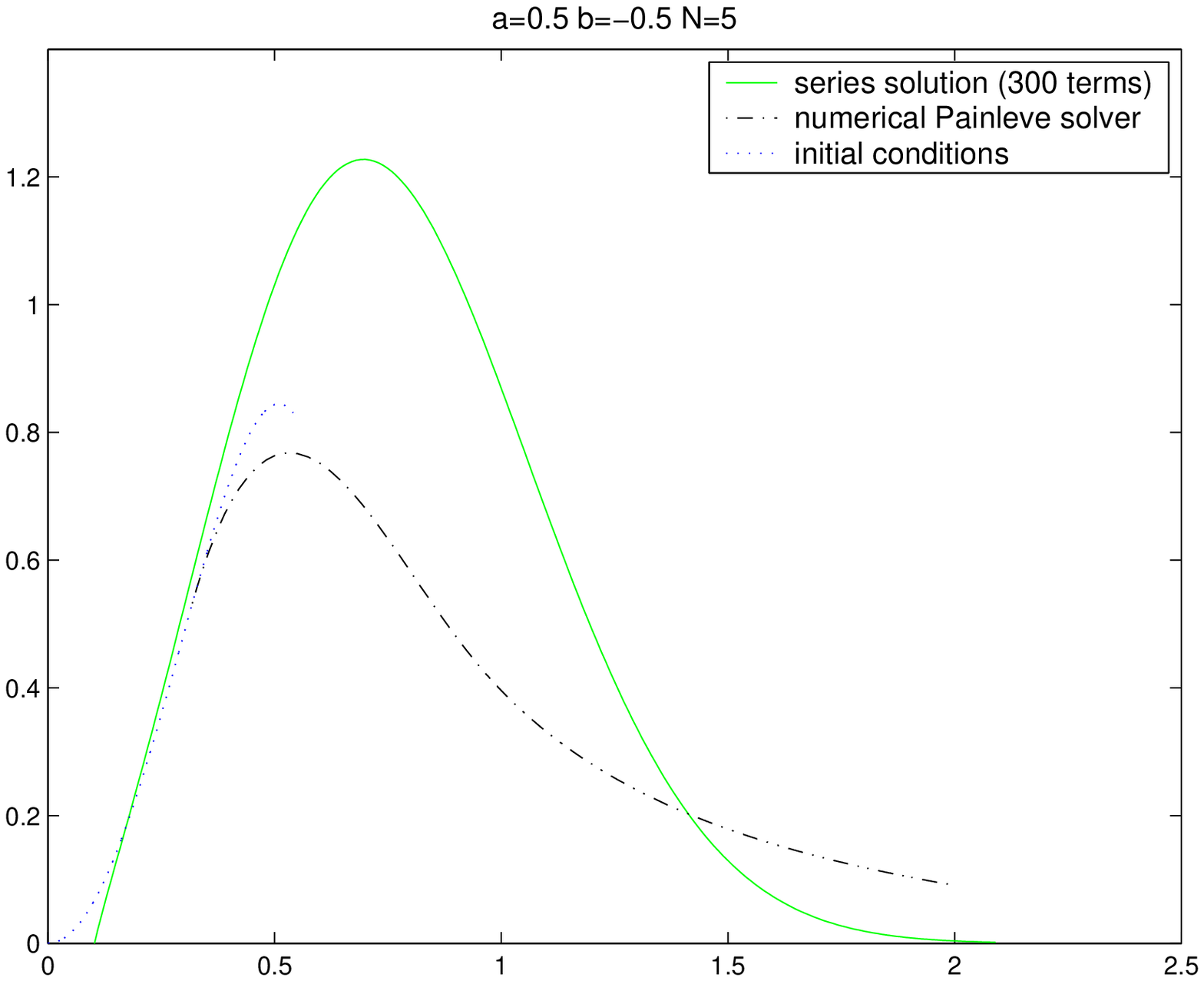}
\caption{Plot of $\nu_N^{a+1/2,b+1/2}(\tfrac{\pi}{N}\theta)$, the
scaled distribution of the first eigenvalue. On the left $N=2$,
$a=0.5$ and $b=-0.5$.  The numerical solver was given the initial
conditions (blue dotted line) and produced the solution shown with
the dot-dashed line. This fails to produce an accurate solution,
as shown by comparison with the series expansion (green solid
line) using 50 terms. On the right $N=5$, $a=0.5$ and $b=-0.5$.
The numerical solver was given the initial conditions (blue dotted
line) and produced the solution shown with the dot-dashed line.
This fails to produce an accurate solution, as shown by comparison
with the series expansion (green solid line) using 300 terms. The
value of $t_0$ in the two plots are $t_0=1-7\times10^{-2}$ and
$t_0=1-10^{-2}$ respectively, and for both plots $
\texttt{reltol}=10^{-7}$ and $\texttt{abstol}=10^{-7}$. }
 \label{badnumerics}
\end{figure}

For $a>0$ and small values of $N$ a solution valid over the whole
interval can still be glued together by matching the series
solution (with a sufficient number of coefficients) for the tail
and bulk of the curve with the known asymptotic behavior near
$\phi=0$. The right-hand plot in Figure \ref{badnumerics} shows
that this is certainly possible for $N=5$.

The series solution produces a very accurate curve with only 50 terms when
$N=2$, but the number of terms needed to obtain a solution which is meaningful
over a large interval increases with $N$, which is to be expected because the
most interesting behavior occurs near $\phi=0$ and can only be captured at the
price of using many terms in an expansion about $\phi=\pi$. A good curve for
$N=5$ requires around 300 terms. We did not produce results for higher $N$ as
the run time was prohibitive, but on a fast computer more terms could be
computed and good solutions for larger $N$ could be achieved by this method.


\section{Summary}

In summary, we find that our MATLAB code for numerically solving the nonlinear
second-order differential equation (Painlev\'e~VI) of the auxiliary Hamiltonian
$h(t)$ associated to the $\tau$-function $\tEN(t)$, giving the distribution of
the first level in a Jacobi ensemble $J_N^{{(a,b)}}$, appears to work fine for
arbitrary $N$ provided the parameters lie in the ranges: $-0.5 \leq a\leq 0$ and
$-0.5\leq b \leq 0.5$. We restricted our tests to the interval $[-0.5,0.5]$ as
this is the range of interest interpolating between the classical compact groups
SO$(2N)$, SO$(2N+1)$ and USp$(2N)$. The program's numerical accuracy is
confirmed by comparing it with a power-series expansion of the solution found
using SAGE. Unfortunately the restriction on $a$ to be non-positive means that
the numerical solver cannot cope with the symplectic group USp$(2N)$ nor the odd
orthogonal group SO$(2N+1)$. For positive $a$ and small values of $N$ this
limitation can be overcome by using the series expansion to obtain the tail and
the bulk of the distribution, matching the series result with the initial
conditions for the behaviour near the origin. In a forthcoming paper
our algorithms are applied to study the distribution of the first
zero of $L$-functions with even functional equation associated with quadratic
twists of a fixed elliptic curve. In the limit of large conductors in families
of such $L$-functions the zero statistics are expected to be modelled by
eigenvalues from SO$(2N)$.


\appendix
\section{MATLAB Code}
Here we give the MATLAB code for numerically solving the Painlev\'e VI equation associated
to the distribution of the first eigenphase of random matrix ensembles in the Jacobi ensemble
$J_N = J_N^{(\alpha,\beta)}$.

The main program is \verb+painleve6.m+ and it computes the numerical solution to the system of differential
equations~(\ref{Painlevesystem}) for the Jacobi ensemble $\JN$. For fixed variables \verb+a, b, N+ the program is
called by entering
\begin{equation} \label{mainprogram}
  \texttt{function [t,H,theta,Fp] = painleve6(a,b,N)}
\end{equation} Here \texttt{t}, \texttt{H} correspond to the variables $t$ and $H$ (see \reff{vectorH}), and
\texttt{theta} corresponds to the rescaled angular variable $\theta =
N\phi/\pi = {\frac N\pi}\cos^{-1}(2t-1)$. The output variable
\verb+Fp+ in \reff{mainprogram} is a vector of values of the rescaled
distribution
\begin{equation} \label{definitionofFp}
  \nu_N^{(a+1/2,b+1/2)}\bigg(\frac{\pi \theta}{N} \bigg) = \frac{\pi}{2 N}
  \sin\bigg(\frac{\pi \theta}{N} \bigg) \frac{\tEN(t)}{t(t-1)}
  \left[h(t) - t e_2'[\bb] + \tfrac{1}{2} e_2[\bb] \right]
\end{equation}
of the first eigenphase (the rescaling achieves mean unit spacing of the $N$
eigenphases $\theta_1,\dots,\theta_N$ on $[0,N]$), as obtained
from~(\ref{dHfirstcomponent}) via the rescaled variable $\theta$ (=
{\small\verb+theta+}).
The subsequent command
\begin{equation}
\texttt{plot(theta, Fp)}
\end{equation}
plots the distribution \verb+Fp+ (as defined in
\reff{definitionofFp}) of the first rescaled eigenphase
$\theta_{\min}$ for~$J_N$.  Notice that \verb+a=-0.5+ and
\verb+b=-0.5+ corresponds to SO$(2N)$. Likewise setting
\verb+a=0.5+ and \verb+b=-0.5+ gives SO($2N+1$) and finally
USp$(2N)$ would correspond to choosing \verb+a=0.5+ and
\verb+b=0.5+.

\vspace*{3mm}
\hrule
\vspace*{3mm}

The code of \verb+painleve6.m+ is given as follows

\begin{verbatim}
function [t,H,theta,Fp] = painleve6(a,b,N)

t0 = 1 - 1e-7;
phi0 = acos(2*t0-1);

% in parameter regions where the numerical solver is robust t0 can be
% taken to be about 1-1e-7

  b1 = (a+b)/2+N;
  b2 = (a-b)/2;
  b3 = -(a+b)/2;
  b4 = -(a+b)/2-N;
  e2p = b1*b3 + b1*b4 + b3*b4;
  e2 = e2p + b2*(b1+b3+b4);
  r = a+0.5; % note that r and s correspond to alpha and beta
  s = b+0.5;

  % initial conditions below are from Section 5. The files Hone.m
  % and Htwo.m calculate often-used ratios of gamma functions.  EN,
  % ENderiv1 and ENderiv2.m calculate E_N^{(a,b)}(phi), and its
  % first and second derivatives with respect to phi
  % (*not* with respect to t).

  root0 = sqrt(t0*(1-t0));
  E0 = EN(r,s,N,phi0);
  E0d = ENderiv1(r,s,N,phi0);
  E0dd = ENderiv2(r,s,N,phi0);
  H0 = [ E0; ...
         t0*e2p - 0.5*e2 + root0*E0d/E0; ...
         e2p + (0.5-t0)/root0 * E0d/E0 - E0dd/E0 + (E0d/E0)^2 ];

  % First component of H will be   \tilde{E}_N^{(a,b)}(t)
  % Second component of H will be the auxiliary hamiltonion h(t)
  % Third component of H will be h'(t)
  % H0 contains the initial conditions at t=t0 (Note: t0 is a
  % number very close to 1, not close to zero!!

  opts=odeset('reltol',1e-5,'abstol',1e-6);

  % a command like "opts=odeset('reltol',1e-5,'abstol',1e-6);" works
  % fine for parameter ranges where the numerical solver is robust,
  % and the program just takes a few seconds/minutes to run.
  % As a becomes positive the differential equation
  % solver has trouble and we don't get a correct solution
  % when N is large, a plot on a better scale is produced by
  % replacing the second argument in the range [t0,0.01] with
  % 0.5*cos(2*pi/N)+1/2

  [t,H] = ode45(@p6diff,[t0,0.01],H0,opts,b1,b2,b3,b4,e2p,e2);

  F = H(:,1);
  h = H(:,2);
  % theta is the scaled angular variable: theta=(N/pi)*acos(2*t-1)
  theta = (N/pi)*acos(2*t-1);

  % Fp(theta) is the distribution of the first eigenvalue in scaled
  % variables
  Fp = (pi/2/N)*sin(pi*theta/N).*F./t./(t-1).*(h-e2p*t+e2/2);
\end{verbatim}
\vspace*{3mm}
\hrule
\vspace*{3mm}
The system \reff{F(H)} is defined
in the file \verb+p6diff.m+. The code is given as
\begin{verbatim}
function dH = p6diff(t,H,b1,b2,b3,b4,e2p,e2)
dH = zeros(3,1);
dH(1) = H(1)/t/(t-1)*(H(2)-e2p*t+e2/2);
dH(2) = H(3);
dH(3) = +sqrt( ...
              ( ...
               (H(3)+b1^2)*(H(3)+b2^2)*(H(3)+b3^2)*(H(3)+b4^2) ...
               - ( H(3)*(2*H(2)-(2*t-1)*H(3)) + b1*b2*b3*b4 )^2 ...
              ) / H(3) ...
             ) /t/(1-t);
\end{verbatim}

\vspace*{3mm}
\hrule
\vspace*{3mm}

The often used ratios of gamma functions $H_1$ and $H_2$ (see \reff{HoneforMatlab} and \reff{HtwoforMatlab}) are coded
in the files \verb+Hone.m+ and \verb+Htwo.m+ given as
\begin{verbatim}
function H1 = Hone(r,s,N)
H1 = gamma(r + N +0.5) * gamma(r + s + N) / 2^(2*r) ...
     / gamma(N + 1) / gamma(r + 1.5) / gamma(r + 0.5) ...
     / gamma(s + N - 0.5);
\end{verbatim}
and
\begin{verbatim}
function H2 = Htwo(r,s,N)
H2 = gamma(r + N + 0.5) * gamma(r + s + N + 1) / 2^(2*r+ 1) ...
     / gamma(N + 1) / gamma(r + 2.5) / gamma(r + 0.5) ...
     / gamma(s + N - 0.5);
\end{verbatim}

\vspace*{3mm}
\hrule
\vspace*{3mm}

Finally, we provide the code for \verb+EN.m, ENderiv1.m+, and \verb+ENderiv2.m+

\begin{verbatim}
function E = EN(r,s,N,phi)
% This is an expansion in phi around phi=0 of E_N^{(a,b)}(phi).
\end{verbatim}
\verb+% See+ \texttt{\reff{EGNforMatlab}}.
\begin{verbatim}
exponent = 2*r+1;
E = 1;
E = E - N*Hone(r,s,N)*phi.^exponent/exponent;
exponent = exponent + 2;
E = E + N*( ...
           (N-1)*Htwo(r,s,N)*phi.^exponent ...
           + (r/12.0+s/4)*Hone(r,s,N)*phi.^exponent ...
          )/exponent;


function Ed = ENderiv1(r,s,N,phi)
% This is \frac{d}{d\phi}E_N^{(a,b)}(phi).
\end{verbatim}
\verb+% See+ \texttt{\reff{conditiontwoMatlab}}.
\begin{verbatim}
exponent = 2*r;
Ed = -N*Hone(r,s,N)*phi.^exponent;
exponent = exponent + 2;
Ed = Ed + N*( ...
              (N-1)*Htwo(r,s,N)*phi.^exponent ...
              + (r/12.0+s/4)*Hone(r,s,N)*phi.^exponent ...
            );


function Edd = ENderiv2(r,s,N,phi)
% This is \frac{d}{d\phi}E_N^{(a,b)}(phi).
\end{verbatim}
\verb+% See+ \texttt{\reff{conditionthreeMatlab}}.
\begin{verbatim}
exponent = 2*r-1;
Edd = -2*r*N*Hone(r,s,N)*phi.^exponent;
exponent = exponent+2;
Edd = Edd ...
      + N*(exponent+1)*( ...
                         (N-1)*Htwo(r,s,N)*phi.^exponent ...
                         + (r/12.0+s/4)*Hone(r,s,N)*phi.^exponent ...
                        );
\end{verbatim}

\section{SAGE Code}
\label{sec:sage-code}

Below follows the SAGE code implementing the symbolic power series
evaluation of the $\tau$-function. Note that the parameters $a,b,N$
correspond to \texttt{a, b, N}, which along with \texttt{DEGREE} (the
degree of the sought-after truncation of the power series) are
hard-coded in the first four lines. Note also that the code provided
only works for rational values of $a,b$ and integer~$N$ (cf.,
Section~\ref{sec:power-series}). Once run, the code defines functions \texttt{E(t), Ep(t),
  pcummul(phi)} and \texttt{nu(phi)} implementing
$\tEN(t),\frac{d}{dt}{\tEN}(t), 1-\EN(\phi)$ and $\nu(\phi)$,
respectively. In particular, \texttt{nu} has been used to produce the
plots in figures~1 and~2.

Note that in Python syntax any text following the literal \texttt{\#} is simply
a comment.

\lstset{language=Python, numbers=left, numberstyle=\tiny, numbersep=5pt, stringstyle=\ttfamily}
\begin{lstlisting}
DEGREE = 50   # degree (plus 1) of the truncated power series
a = -1/2
b = -1/2
n = 4

b1 = (a+b)/2+n
b2 = (a-b)/2
b3 = -(a+b)/2
b4 = -(a+b)/2-n
e2p = b1*b3 + b1*b4 + b3*b4
e2 = e2p + b2*(b1+b3+b4)

R.<x> = QQ['x']    # R = Q[x]
S.<t> = PowerSeriesRing(R,default_prec=DEGREE)
                   # S = Q[x][[t]]
def Jac(a,b,n):
  jac=1.
  for i in xrange(n):
      jac *= real(gamma(a+i+1.)) * real(gamma(b+i+1.))\
             * real(gamma(i+2.)) / real(gamma(a+b+n+i+1.))
  return jac
# NOTE: Jac (as above) only works for *integer* values of n

leadcoef = Jac(0,b,n)/Jac(a,b,n)
leadexp = n*(n+b)
g = [-e2/2-leadexp, e2p+leadexp*(1+a/(b+2*n))]
h = g[0] + t*g[1]

X = var('X')        # symbolic Maxima variable

# BEGIN MAIN LOOP: Finds successive coefficients of series h(t)
for i in xrange (2,DEGREE):
 h = h + x*t^i + O(t^(i+2))
 hp = h.derivative()
 hpp = hp.derivative()
 PEZ = hp*(hpp*t*(1-t))^2 + (hp*(2*h-hp*(2*t-1))+b1*b2*b3*b4)^2 \
       - (hp+b1^2)*(hp+b2^2)*(hp+b3^2)*(hp+b4^2)
 LHS = PEZ[i](X)   # Substitute x=X in p_i(x)
 solucion = solve (LHS==0, X)   # Find root of p_i(X)
 g.append(QQ(solucion[0].rhs()))       # The root becomes
                                       #  the next coefficient
 # Now reconstruct h up to degree i using the newly-found g[i]
 h = g[0]+g[1]*t+g[2]*t^2
 for j in xrange (3,i+1):
     h += g[j]*t^j
 # END OF MAIN LOOP

h = h + O(t^DEGREE)

h1 = (h-h[0])/t
F = h1
F -= e2p+leadexp
F /= t-1+O(t^DEGREE)
F = F.integral()
F = F.exp()
F = F.truncate(DEGREE+1)

htilde = h1 - e2p
htilde = htilde.truncate(DEGREE-1)

def E(u):
  return leadcoef * u^leadexp * F(u)
def Ep(u):
  return (htilde(u)/(u-1) + leadexp/u/(1-u)) * E(u)

def pcummul(theta):
  return 1-E((1+cos(theta))/2)
def nu(theta):
  if theta < 1.e-6 or theta > pi - 1.e-6:
      return 0
  t = (1+cos(theta))/2
  return sqrt(t*(1-t)) * Ep(t)
\end{lstlisting}

\bibliography{painleve6}{}
\bibliographystyle{plain}











\ \\

\end{document}